\setlist{itemsep=0.5em}
\newcommand{\isom}{\cong} 
\theoremstyle{definition}
\numberwithin{equation}{section}
\DeclareMathOperator{\Spec}{\textbf{Spec}}
\DeclareMathOperator\codim{codim}
\newcommand{\vol}{\mathrm{vol}}
\renewcommand{\AA}{\mathbb{A}}
\newcommand{\NN}{\mathbb{N}}
\newcommand{\QQ}{\mathbb{Q}}
\newcommand{\RR}{\mathbb{R}}
\newcommand{\ZZ}{\mathbb{Z}}
\newcommand{\cF}{\mathcal{F}}
\newcommand{\cH}{\mathcal{H}}
\newcommand{\cL}{\mathcal{L}}
\newcommand{\cO}{\mathcal{O}}
\newcommand{\fm}{\mathfrak{m}}
\newcommand{\fa}{\mathfrak{a}}
\newcommand{\iI}{\mathscr{J}}
\newcommand{\iL}{\mathscr{L}}
\newcommand{\iF}{\mathscr{F}}
\theoremstyle{plain}
\newtheorem{thm}{Theorem}[section]
\newtheorem{Pn}[thm]{Proposition}
\newtheorem{Cor}[thm]{Corollary}
\newtheorem{lem}[thm]{Lemma}
\theoremstyle{definition}
\newtheorem{dfn}[thm]{Definition}
\newtheorem{eg}[thm]{Example}
\newtheorem{rem}[thm]{Remark}
\newtheorem{ques}[thm]{Question}
\newtheorem{notation}[thm]{Notation}
\begin{document}
	
	\title{Multiplicities of jumping numbers}
	\author{Suchitra Pande}
	\address[S.~Pande]{Department of Mathematics\\University of Michigan\\Ann Arbor, 
		MI 48109-1043\\USA}
	\email{\href{mailto:swarajsp@umich.edu}{swarajsp@umich.edu}}
	\subjclass[2020]{Primary 14F18; Secondary 13D40}
	\keywords{multiplier ideals, jumping numbers, Poincar\'e series, Rees valuations, monomial ideals}
	\thanks{This project was partially supported by NSF Grants DMS 1801697 and 2101075.}
	
	\begin{abstract}
		We study multiplicities of jumping numbers of multiplier ideals in a smooth variety of arbitrary dimension. We prove that the multiplicity function is a quasi-polynomial, hence proving that the Poincar\'e series is a rational function. We further study when the various components of the quasi-polynomial have the highest possible degree and relate it to jumping numbers contributed by Rees valuations. Finally, we study the special case of monomial ideals.
	\end{abstract}
	
	\maketitle

	\section{Introduction}
	Let $X$ be a smooth variety over an algebraically closed field of characteristic $0$. Associated to each closed subscheme $Z$ of $X$ is a family of ideals, called the {\it multiplier ideals} (Definition~\ref{MultiplierIdeal}),  that quantify  the singularities of $Z$. The multiplier ideals are indexed by a positive real parameter $c$ (denoted by $\iI(c \cdot Z)$), and form a decreasing family of ideals of $\cO_{X}$.
	
	As $c$ varies over the positive real numbers, the stalks of these ideals $\iI( c \cdot Z)_{x}$ at any point $x$ change exactly at a discrete set of rational numbers $c_{i}$ called the \emph{jumping numbers} of $Z$ at $x$ (Definition~\ref{jumpingnumbers}). So we get a descending chain of ideals
	\begin{equation} \label{chain}
		\cO_{X, x} \supsetneqq \iI(c_{1} \cdot Z)_{x} \supsetneqq \iI(c_{2} \cdot Z)_{x}  \supsetneqq \cdots	.	
	\end{equation}\
	The jumping numbers, first  defined and studied in \cite{MR2068967}, are interesting invariants of the singularity of $Z$ at $x$. For example, the smallest jumping number is the well-known log canonical threshold ($lct$) of the subscheme. The log canonical threshold, and more generally any  jumping number in the interval $[lct, lct + 1 ) $ is a  root of $b_{Z}(-s),$ where $b_{Z}(s)$ is the famous \emph{Bernstein-Sato polynomial} of $Z$; see \cite{MR1492525}, \cite{MR2068967} and \cite{MR2231202}. Jumping numbers were connected to the Hodge Spectrum of a hypersurface by Budur in \cite{MR2015069}.

	 Jumping numbers have been studied extensively in the case when dimension of $X$ is two, starting with \cite{MR2389246}. For example, explicit formulas for the jumping numbers have been calculated in \cite{MR1704476}, \cite{MR3479548}, \cite{MR2856648}, \cite{MR2470185} and \cite{MR3863487}. More algorithms for computing jumping numbers can be found in \cite{MR2592954}, \cite{MR3510908} and \cite{MR3558221}.
	
The purpose of this paper is to study jumping numbers in higher dimensions. We do this by studying a natural refinement of the jumping numbers, namely \emph{multiplicities of jumping numbers}, first introduced in \cite{MR2068967}. More precisely, fix a closed subscheme $Z$ of a smooth variety $X$, and an irreducible component $Z_{1}$ of $Z$. 
	All the (stalks of) multiplier ideals of $Z$ will have finite colength in the local ring at $Z_{1}$. 
	So any jumping number $c$ at $Z_{1}$ has a natural \emph{multiplicity $m(c)$} that measures the change in the multiplier ideal at $c$, namely
	
	\begin{equation}\label{eq} 
	m(c) :=	\lambda( \iI(\fa^{c -\varepsilon})_{x}/\iI(\fa^{c})_{x})   \quad \text{for $0 < \varepsilon \ll 1$.}    
	\end{equation}
	 Here,  $\iI(\fa^{c})_{x} $ denotes the stalk of the multiplier ideal of $Z$ at $x$, the generic point of $Z_{1}$ in $X$ and $\lambda$ denotes the length as an $\cO_{X,x}$ module. If the real number $c$ is not a jumping number, we define its multiplicity to be zero, compatibly with (\ref{eq}).

	
	 For any jumping number $c$, we obtain another jumping number by adding any positive integer, so it is natural to consider the sequence of multiplicities of the jumping numbers $c+n$, as $n$ ranges through the natural numbers. Our first main theorem is

\theoremstyle{plain}	
\newtheorem*{thm1intro}{Theorem \ref{thm3.1}}

\begin{thm1intro}
Let $Z$ be a closed subscheme of $X$ and $Z_{1}$ an irreducible component  of $Z$. For any positive real number $c$ and natural number $n$, let $m(c+n)$ denote the multiplicity of $c+n$ of $Z$ along $Z_1$. Then the sequence of multiplicities $$(m(c+n))_{n\in \mathbb N}$$ is a polynomial function of $n$ of degree less than the codimension of $Z_{1}$ in $X$.
\end{thm1intro}

Theorem~\ref{thm3.1} allows us to interpret the multiplicity function $m(c)$ as a quasi-polynomial in $c$; see Corollary \ref{quasipoly}. 

Theorem~\ref{thm3.1} generalizes the work of Alberich-Carrami\~{n}ana, \`Alvarez Montaner, Dachs-Cadefau and Gonz\'{a}lez-Alonso when $X$ is a surface \cite{MR3558221}. They compute the multiplicities explicitly in terms of the intersection matrix of the exceptional divisors in a log resolution. In higher dimension, we instead use the numerical intersection theory of divisors as developed by Kleiman in \cite{MR206009}.

The polynomial of Theorem~\ref{thm3.1} encodes interesting information about the divisors relevant for computing jumping numbers as developed in \cite{MR2389246}. For example, its degree tells  us  precisely whether (possibly some translate of) the jumping number is contributed by a Rees valuation:

\theoremstyle{plain}	
\newtheorem*{thm2intro}{Theorem \ref{contrithm}}

	\begin{thm2intro}
		For a closed subscheme $Z$ of $X$ and any positive real number $c$, consider the multiplicity polynomial $m(c+n)$ along an irreducible component $Z_1$  of codimension $h$ in $X$.
		Then the  degree of this polynomial is equal to $h-1$ if and only if  $c+h-1$ is a jumping number contributed---in the sense of  \cite{MR2389246}---by some Rees valuation of $Z$ centered at $Z_{1}$.
	\end{thm2intro}
	
	Theorem~\ref{contrithm} motivates the term \emph{Rees coefficient} of the jumping number $c$ for the coefficient of $n^{h-1}$ in the polynomial $m(c+n)$ from Theorem~\ref{thm3.1}. The Rees coefficient is not necessarily the ``leading coefficient" of the polynomial $m(c+n)$ because it can be zero; see Example~\ref{example}. 
	
	We prove several applications of 	Theorem~\ref{contrithm}. For example, we show that certain jumping numbers of $Z$ can be computed directly from the normalized blowup of $Z$ without finding a full log resolution; see Corollary~\ref{maincor} for a precise statement. As another consequence, we answer a question posed by Joaqu\'{i}n Moraga: we prove that every divisorial valuation over $X$ contributes a jumping number for some divisor; see Theorem~\ref{valuationthm}. 
	
  The Rees coefficient of a jumping number $c$ is the same as the Rees coefficient of any translate of $c$ by an integer. So we can consider the Rees coefficient of a class of jumping numbers modulo $\ZZ$; there are finitely many such classes, by discreteness of jumping numbers. We prove in Theorem~\ref{HSmult} that the sum of the Rees coefficients for the distinct classes of jumping numbers modulo $\mathbb Z$ 
  is equal to the
  \emph{Hilbert-Samuel multiplicity} of $Z$ at $Z_{1}$  (scaled by $\frac{1}{(h-1)!}$). So the Rees coefficients can be thought of as refinements of the Hilbert-Samuel multiplicity.
	
	 In Section 5,  we study the special case of point schemes defined by \emph{monomial ideals}. We prove formulas for the multiplicities and for the Rees coefficients of each jumping number in this case (Theorem~\ref{monomial}). In particular, we see that the Rees coefficient of every jumping number of  a monomial  scheme is positive (Corollary~\ref{anotcor}). Thus Theorem~\ref{contrithm} implies that, for monomial ideals, all jumping numbers (after translation by some integer)  are contributed by Rees valuations.
	
	Finally, in Section 6, we examine a generating function for multiplicities of a jumping number. For an irreducible component $Z_1$ of a fixed subscheme $Z$, we define a Poincar\'e series from the multiplicities $m(c)$, and prove that it is a rational function in a suitable sense; see Theorem \ref{poincare}. This generalizes the previous results from \cite{MR2671187} and \cite{MR3558221}, valid for  point schemes in dimension two. Theorem \ref{poincare}, which is valid in any dimension,  was independently proved by \`{A}lvarez Montaner and N\'{u}\~{n}ez-Betancourt in \cite{AMNB21} using different methods.

	\subsection*{Acknowledgements}
	I would first like to thank my PhD thesis advisor Prof. Karen Smith for generously sharing her time and ideas, and closely guiding me throughout this project. I thank Josep \`{A}lvarez Montaner and Luis N\'{u}\~{n}ez-Betancourt for communicating their results and kindly sharing their preprint \cite{AMNB21} with me. I also thank them for suggesting a useful modification to the statement of Theorem~\ref{poincare}. I thank the referee for many useful suggestions for improving the paper. I thank Joaqu\'{i}n Moraga for raising questions that led to Theorem~\ref{valuationthm}. I thank Prof. Suresh Nayak for comments on an earlier draft. Finally I would like to thank Shelby Cox, Andrew Gordon, Sayantan Khan, Devlin Mallory, Malavika Mukundan, Sridhar Venkatesh and Yinan Nancy Wang for helpful conversations.
	
	\section{Review of Multiplier Ideals and Intersection Theory}

Throughout this paper, we work over an an algebraically closed field $k$ of characteristic zero.

	\subsection{Multiplier Ideals and Jumping Numbers}
	\label{multiplier}
	Let $X$ be a smooth variety over $k$. Fix a coherent ideal sheaf $\fa$ of $\cO_{X}$ and let  $Z$ be the subscheme defined by $\fa$. We will now recall the  definition of the multiplier ideals $ \iI(c \cdot Z)$, interchangeably denoted by  $\iI(\fa^{c}) $,  referring the reader to \cite{MR2095472} for details. 
	
	\begin{dfn} \label{logresolution} A {\it log resolution } of the ideal $\fa$ is a map $\mu: Y \to X $ that is projective and birational such that:
	\begin{itemize}
	    \item [(a)] $Y$ is smooth,
	    \item [(b)] $\fa \cdot \cO_{Y} = \cO_{Y}(-F)$ where $F$ is an effective divisor and
	    \item [(c)] $ F+K_{Y|X}$ has simple normal crossing support,
	where   $ K_{Y|X} $  denotes the relative canonical divisor of $\mu$.
	\end{itemize}
	\end{dfn}
	
     Such log resolutions exist by Hironaka's theorem on resolution of singularities.
	
	\begin{dfn} \label{MultiplierIdeal}
	    For any positive real number $c$,  the \emph{multiplier ideal} of $\fa$ at $c$ is defined as $$\iI( \fa^{c}) = \mu _{*} (\cO_{Y}(K_{Y|X} - \lfloor cF \rfloor)),$$
	    where $\mu: Y \to X$  is any log resolution of $\fa$ and $F$ is an effective divisor with $\fa \cdot \cO_{Y} = \cO_{Y}(-F)$. The multiplier ideal is independent of the choice of the log resolution.
	\end{dfn}

 As the positive real parameter $c$ increases, the stalks of the multiplier ideal $\iI( \fa^{c})_{x}$ at any point $x$ decrease. The numbers $c$ at which the stalks change are called the \emph{jumping numbers} of the ideal $\fa$ (or $Z$) at $x$. More precisely,
	 
	 \begin{dfn} \cite{MR2068967} \label{jumpingnumbers}
	     A positive real number $c$ is a \emph{jumping number} of the subscheme $Z$ at a point $x \in X$ if
	     $$ \iI(\fa^{c})_{x} \subsetneqq \iI(\fa^{c - \varepsilon})_{x} \quad \text{for all $\varepsilon > 0 $.}     $$
	 \end{dfn}
	 It follows from the definition of the multiplier ideal in terms of a log resolution that the jumping numbers of any subscheme $Z$ are discrete and rational (see Section~\ref{candidate} below). 
	 
	
	\subsubsection{\textbf{Candidate Jumping Numbers:}} \label{candidate} Let $\mu: Y \to X$  be any log resolution of $\fa$ with $\fa \cdot \cO_{Y} = \cO_{Y}(-F)$ for an effective divisor $F$. Suppose $F = \sum _{i} a_{i} D_{i} $ for prime divisors $D_{i}$. Then, we call the numbers of the form $\frac{n}{a_{i}}$ for natural numbers $n$ the \emph{candidate jumping numbers} of $\fa$. These are precisely the values of $c$ where the divisor $K_{Y|X} - \lfloor cF \rfloor$ changes, hence the set of jumping numbers at any point is certainly contained in the set of candidate jumping numbers. The candidate jumping numbers make it clear that the set of jumping numbers is rational and discrete. Moreover, there is a uniform $\varepsilon$ such that $c_{i+1} -c_{i} > \varepsilon$ for any two consecutive jumping numbers $c_{i}$ and $c_{i+1}$.
	
	Note that this definition is slightly different from the candidate jumping numbers as defined in \cite{MR2389246} where they were defined to be the set of rational numbers where the divisor  $K_{Y|X} - \lfloor cF \rfloor$ changes and is not effective. The main reason for this deviation is that now for every candidate jumping number $c$, its fractional part $\{c\}$ is also a candidate jumping number. Hence, every jumping number $c$ can be written as $\{c\} + \lfloor c \rfloor$ i.e., an integer translate of a candidate jumping number that lies in the interval $(0,1]$. \\

	We now recall two of the main results from the theory of multiplier ideals in the form that we will use them. The reference for these results is \cite[Chapter 9]{MR2095472}.
	
	\begin{thm}{\textbf{(Local Vanishing Theorem)}} \label{localvanishing}
		
		Let $\fa \subset \cO_{X}$ be an ideal, and $\mu: Y \to X$ be a log resolution of $\fa$ and let $\fa \cdot \cO_{Y} = \cO_{Y}(-F)$ for an effective divisor $F$. Then for any $c >0$, the following higher direct images vanish:
		$$ R^{j} \mu_{*} \cO_{Y}(K_{Y|X} - \lfloor cF\rfloor ) = 0 \quad	\mathrm{	for } \quad j >0. $$
	\end{thm}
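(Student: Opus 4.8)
The plan is to reduce the statement to the relative Kawamata--Viehweg vanishing theorem. Since the conclusion is local on $X$, I would first replace $X$ by an affine open subset, and it suffices to treat $c \in \QQ$ (as $\lfloor cF\rfloor$ is locally constant in $c$ away from the rational ``candidate jumping numbers''). On an affine $X$ the ideal $\fa$ is generated by finitely many global sections, so $\cO_Y(-F) = \fa\cdot\cO_Y$ is a quotient of a free sheaf pulled back from $X$, hence is globally generated; thus $-F$, and therefore $-cF$ for every $c>0$, is base-point free and in particular nef relative to $\mu$. Moreover, since $\mu$ is birational its generic fibre is a point, so $-cF$ is automatically big relative to $\mu$.

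Next I would rewrite the integral divisor in the theorem in a form to which relative Kawamata--Viehweg applies. Using $K_{Y|X} = K_Y - \mu^*K_X$ and $-\lfloor cF\rfloor = \{cF\} - cF$, one has
\[
K_{Y|X} - \lfloor cF\rfloor \;=\; K_Y \,+\, \{cF\} \,+\, \big(-cF - \mu^*K_X\big).
\]
Here $\{cF\}$ is an effective $\QQ$-divisor whose support is contained in $\mathrm{Supp}(F)$ --- hence, by the defining property of the log resolution, has simple normal crossings --- and all of whose coefficients lie in $[0,1)$, so $\lfloor\{cF\}\rfloor = 0$. The $\QQ$-divisor $-cF - \mu^*K_X$ is numerically equivalent over $X$ to $-cF$, since $\mu^*K_X$ is trivial on the fibres of $\mu$; hence it is nef and big relative to $\mu$ by the previous paragraph.

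Thus $N := K_{Y|X} - \lfloor cF\rfloor$ has the shape $K_Y + \Delta + A$ with $\Delta\geq 0$ a simple normal crossings $\QQ$-divisor satisfying $\lfloor\Delta\rfloor = 0$ and $A$ a $\QQ$-divisor that is nef and big relative to $\mu$. The local (relative) form of Kawamata--Viehweg vanishing (see Chapter 9 of \cite{MR2095472}) then yields $R^j\mu_*\cO_Y(N) = 0$ for all $j > 0$, which is the assertion.

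The genuinely substantive input is the Kawamata--Viehweg vanishing theorem itself; granting that, the only point demanding a little care is the observation that $-cF$ is big relative to the birational morphism $\mu$, which is what licenses relative Kawamata--Viehweg (rather than merely relative Grauert--Riemenschneider). Should one wish to bypass this, the same conclusion follows by applying relative Grauert--Riemenschneider vanishing on a cyclic cover ramified along $cF$ in the style of Kawamata's covering trick, thereby reducing to a divisor that is trivial over $\mu$; but invoking relative Kawamata--Viehweg directly is the most economical route.
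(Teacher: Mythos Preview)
The paper does not give its own proof of this theorem: it is stated in the review section as one of ``two of the main results from the theory of multiplier ideals'' and is simply referenced to Chapter~9 of \cite{MR2095472}. Your argument is correct and is, in outline, precisely the standard proof one finds there (see Theorem~9.4.1 in \cite{MR2095472}): write the integral divisor $K_{Y|X}-\lfloor cF\rfloor$ as $K_Y + N$ with $N \equiv_\mu (-cF) + \{cF\}$, observe that $-cF$ is $\mu$-nef (because $\cO_Y(-F)=\fa\cdot\cO_Y$ is $\mu$-globally generated) and $\mu$-big (because $\mu$ is birational), that $\{cF\}$ is an snc boundary with coefficients in $[0,1)$, and apply relative Kawamata--Viehweg. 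There is nothing to compare against in the paper itself.
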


	\begin{thm} {\textbf{(Skoda's Theorem)}} \label{skoda}
		
		Let $\fa \subset \cO_{X}$ be an ideal and $c>0$ be any real number and $m \geq d := \dim(X)$ be an integer. Then we have,
		$$ \iI(\fa^{c+m}) = \fa \cdot \iI(\fa^{c+m-1}).	$$
	\end{thm}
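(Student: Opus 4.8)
The plan is to obtain Skoda's Theorem from the Local Vanishing Theorem (Theorem~\ref{localvanishing}) by pushing forward a twisted Koszul complex on the log resolution; this is the classical strategy, and the argument splits into a reduction step and a cohomological step.

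\emph{Reduction.} The claimed equality of ideal sheaves may be checked at each closed point $x$, where $A=\cO_{X,x}$ is regular local with (infinite) residue field $k$. One inclusion is immediate: since $\fa\cdot\cO_Y=\cO_Y(-F)$, one gets $\fa\cdot\mu_*\cO_Y(K_{Y|X}-\lfloor tF\rfloor)\subseteq\mu_*\cO_Y(K_{Y|X}-\lfloor(t+1)F\rfloor)$, i.e.\ $\fa\cdot\iI(\fa^{t})\subseteq\iI(\fa^{t+1})$ for all $t>0$. For the reverse, set $c'=c+m$ and choose a minimal reduction $J\subseteq\fa_x$ generated by at most $\dim A=d$ elements, which exists because the analytic spread of $\fa_x$ is at most $d$ and the residue field is infinite. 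As $\overline J=\overline{\fa_x}$ and multiplier ideals depend only on the integral closure, $\iI(\fa_x^t)=\iI(J^t)$ for all $t>0$. Granting the identity $\iI(J^{c'})=J\cdot\iI(J^{c'-1})$ established below (where $J$ is generated by $p\le d$ elements and $c'-p=c+(m-p)>0$ since $c>0$ and $m\ge d\ge p$), the chain
\[
\iI(\fa_x^{c'})=\iI(J^{c'})=J\cdot\iI(J^{c'-1})=J\cdot\iI(\fa_x^{c'-1})\subseteq\fa_x\cdot\iI(\fa_x^{c'-1})\subseteq\iI(\fa_x^{c'})
\]
forces $\fa_x\cdot\iI(\fa_x^{c'-1})=\iI(\fa_x^{c'})$. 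Thus it suffices to prove the theorem when $\fa=(a_1,\dots,a_p)$ with $p\le d$, with $c'=c+m$, so that $c'-p>0$.

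\emph{The complex and its pushforward.} On $Y$ the generators give a surjection of vector bundles $\cO_Y^{\oplus p}\onto\cO_Y(-F)$ onto the invertible subsheaf $\fa\cdot\cO_Y=\cO_Y(-F)\subset\cO_Y$, with kernel a vector bundle $M$ of rank $p-1$ satisfying $\wedge^{p-1}M\isom\wedge^{p}\cO_Y^{\oplus p}\otimes\cO_Y(F)\isom\cO_Y(F)$. The associated Koszul complex
\[
0\to\cO_Y(F)\to\cO_Y^{\oplus p}\to\cO_Y(-F)^{\oplus\binom{p}{2}}\to\cdots\to\cO_Y(-(p-2)F)^{\oplus p}\xrightarrow{(a_i)}\cO_Y(-(p-1)F)\to 0
\]
is exact; this is checked locally on $Y$, where $\cO_Y(-F)$ is principal and the complex becomes a line-bundle twist of the Koszul complex of a $p$-tuple one of whose entries is a unit, hence split exact. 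Tensoring with $\cO_Y(K_{Y|X}-\lfloor c'F\rfloor+(p-1)F)$ and using $\lfloor c'F\rfloor-jF=\lfloor(c'-j)F\rfloor$ for $j\in\ZZ$, the term in position $j$ (counted from the right) becomes $\cO_Y(K_{Y|X}-\lfloor(c'-j)F\rfloor)^{\oplus\binom{p}{j}}$ for $0\le j\le p$, while the rightmost differential remains $(x_1,\dots,x_p)\mapsto\sum_i a_ix_i$. Since $c'-j\ge c'-p>0$ for each such $j$, the Local Vanishing Theorem makes all higher direct images of every term vanish, so — breaking the (exact) twisted complex into short exact sequences and chasing the long exact sequences of $R^{\bullet}\mu_*$ — the pushed-forward complex
\[
0\to\iI(\fa^{c'-p})\to\cdots\to\iI(\fa^{c'-1})^{\oplus p}\xrightarrow{(x_i)\mapsto\sum_i a_ix_i}\iI(\fa^{c'})\to 0
\]
is exact. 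In particular the last map is surjective, with image $\sum_i a_i\,\iI(\fa^{c'-1})=\fa\cdot\iI(\fa^{c'-1})$, so $\iI(\fa^{c'})=\fa\cdot\iI(\fa^{c'-1})$, which is the assertion.

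The two places needing care are: (i) the construction and exactness of this twisted Koszul complex — routine, but it relies on $\fa\cdot\cO_Y$ being invertible, which is exactly the property supplied by the log resolution; and (ii) the legitimacy of the reduction, which uses that multiplier ideals localize and depend only on the integral closure of the ideal, together with the fact that an ideal of a $d$-dimensional regular local ring with infinite residue field admits a reduction generated by at most $d$ elements. The cohomological descent through $\mu_*$ is then a formal consequence of Local Vanishing, and the characteristic-$0$ hypothesis guarantees the resolutions and vanishing theorems invoked.
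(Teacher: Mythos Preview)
The paper does not give its own proof of Skoda's Theorem; it is stated as a background result with a reference to Chapter~9 of \cite{MR2095472}. Your argument is correct and is essentially the standard proof found there: reduce to an ideal with at most $d$ generators via a minimal reduction (using that multiplier ideals depend only on the integral closure and that such reductions exist over an infinite residue field), build the twisted Koszul--Skoda complex on a log resolution, and push it forward using Local Vanishing to obtain surjectivity of the last map $\iI(\fa^{c'-1})^{\oplus p}\to\iI(\fa^{c'})$.
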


	\subsection{Numerical Intersection Theory}
	
	Now we review Kleiman's numerical intersection theory of divisors as developed in \cite{MR206009} and also explained in \cite{MR1841091}. We only state the main facts that we need here.
	
	Let $Y$ be a proper scheme of dimension $d$ over a field $L$. Let $\iL_{1}, \dots, \iL_{r}$ be line bundles on $Y$ and $\iF$ a coherent sheaf on $Y$. Then we have the following theorem from \cite{MR206009} that Kleiman attributes to Snapper:
	
	\begin{thm} \label{Snapper}
		Consider the function $f(m_{1}, \dots, m_{r}) = \chi (\iF \otimes \iL_{1} ^{m_{1}} \otimes  \dots \otimes \iL_{r}^{m_{r}})$ for $m_{1}, \dots, m_{r} \in \ZZ$, where $\chi$ denotes the Euler characteristic (over $L$). Then there is a polynomial $P(x_{1}, \dots, x_{r})$ with coefficients in $\QQ$ and of total degree $\leq \dim \mathrm{Supp}(\iF)$ such that $P(m_{1}, \dots, m_{r}) = f(m_{1}, \dots, m_{r})$ for all $m_{1}, \dots, m_{r} \in \ZZ$.
	\end{thm}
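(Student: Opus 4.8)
The plan is to follow Snapper's classical argument, by induction on $s=\dim\mathrm{Supp}(\iF)$. The combinatorial backbone is the elementary fact that a function $g\colon\ZZ^{r}\to\ZZ$ agrees on all of $\ZZ^{r}$ with a polynomial of total degree $\le n$ (for $n\ge 0$) if and only if, for every index $i$, the partial difference $(\Delta_{i}g)(\underline m)=g(\underline m+e_{i})-g(\underline m)$ agrees with a polynomial of total degree $\le n-1$; one proves the nontrivial direction by induction on $r$, reconstructing $g$ from the slice $g|_{m_{r}=0}$ and the telescoping sums $\sum_{t}(\Delta_{r}g)(\dots,t)$, and using that a polynomial is determined by its values on $\{m_{r}\ge 0\}$. (A $\ZZ$-valued polynomial function on $\ZZ^{r}$ automatically has coefficients in $\QQ$, so this also yields the rationality claim.) Thus it suffices to show that, for each $i$, the function $\Delta_{i}f$ is given by a polynomial of total degree $\le s-1$. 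The base cases are immediate: if $\iF=0$ then $f\equiv 0$, and if $\mathrm{Supp}(\iF)$ is finite then $\chi(\iF\otimes\iL)=\mathrm{length}(\iF)$ for every line bundle $\iL$ (any line bundle is trivial in a neighbourhood of each of the finitely many points of the support), so $f$ is constant.

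For the inductive step fix $i$, write $\iF_{\underline m}=\iF\otimes\iL_{1}^{m_{1}}\otimes\cdots\otimes\iL_{r}^{m_{r}}$, and assume $Y$ is projective over $k$ — this holds in every application we make, and the general proper case is reduced to it by Chow's lemma together with d\'evissage, as in \cite{MR206009}. Twisting by a sufficiently positive power $\iL'$ of a very ample line bundle makes both $\iL_{i}\otimes\iL'$ and $\iL'$ globally generated; by prime avoidance over the infinite field $k$ we may then choose effective Cartier divisors $A\in|\iL_{i}\otimes\iL'|$ and $B\in|\iL'|$ whose supports contain no associated point of $\iF$, so that $\iL_{i}\cong\cO_{Y}(A)\otimes\cO_{Y}(B)^{-1}$. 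Tensoring the two sequences $0\to\cO_{Y}(-A)\to\cO_{Y}\to\cO_{A}\to 0$ and $0\to\cO_{Y}(-B)\to\cO_{Y}\to\cO_{B}\to 0$ with the appropriate twist of $\iF_{\underline m}$, the hypothesis on $A$ and $B$ forces the relevant $\TTor_{1}$ to vanish, so $\chi$ is additive along the resulting short exact sequences and we obtain
$$\Delta_{i}f(\underline m)=\chi\!\left(\iF_{\underline m}\otimes\cO_{Y}(A-B)\right)-\chi(\iF_{\underline m})=\chi\!\left(\bigl(\iF\otimes\cO_{A}\bigr)_{\underline m}\otimes\cO_{Y}(A-B)|_{A}\right)-\chi\!\left(\bigl(\iF\otimes\cO_{B}\bigr)_{\underline m}\right),$$
where $(\;\cdot\;)_{\underline m}$ denotes the twist by $\iL_{1}^{m_{1}}\otimes\cdots\otimes\iL_{r}^{m_{r}}$ restricted to $A$, respectively $B$.

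Now $\cO_{A}$ and $\cO_{B}$ are projective schemes over $k$, and the coherent sheaves $\iF\otimes\cO_{A}$, $\iF\otimes\cO_{B}$ have supports $\mathrm{Supp}(\iF)\cap A$ and $\mathrm{Supp}(\iF)\cap B$ of dimension $\le s-1$, because $A$ and $B$ contain no irreducible component of $\mathrm{Supp}(\iF)$ (their supports avoid the generic points of those components, which are associated points of $\iF$). Applying the inductive hypothesis on $A$ and on $B$ — with the line bundles $\iL_{j}|_{A}$ (resp. $\iL_{j}|_{B}$) and the fixed extra twist absorbed into the sheaf — each of the two Euler characteristics above is a polynomial in $\underline m$ of total degree $\le s-1$, hence so is $\Delta_{i}f$; the combinatorial fact of the first paragraph then gives that $f$ is a polynomial of total degree $\le s$. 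The only place geometry enters, and the step requiring the most care, is this reduction to a divisor: one cuts $\mathrm{Supp}(\iF)$ down in dimension while keeping $\chi$ under control, and since $\iF$ need not be locally free one must position the auxiliary divisors $A,B$ generically with respect to the associated points of $\iF$ so that the $\TTor_{1}$-terms vanish and the Euler-characteristic bookkeeping stays exact.
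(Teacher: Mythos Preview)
The paper does not supply its own proof of this statement: Theorem~\ref{Snapper} is quoted from Kleiman's paper \cite{MR206009} in the review section on numerical intersection theory, with no argument given. So there is nothing in the paper to compare your proposal against.

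That said, your argument is a correct and faithful rendition of the standard Snapper--Kleiman proof. The induction on $s=\dim\mathrm{Supp}(\iF)$ via the partial differences $\Delta_i f$, the reduction $\iL_i\cong\cO_Y(A)\otimes\cO_Y(B)^{-1}$ with $A,B$ avoiding the associated points of $\iF$ so that the relevant $\TTor_1$ vanishes, and the resulting expression of $\Delta_i f$ as a difference of Euler characteristics of sheaves supported in dimension $\le s-1$ are all exactly as in \cite{MR206009}. Two minor points worth flagging: first, the paper states the theorem for $Y$ proper (not projective), so your appeal to Chow's lemma and d\'evissage is genuinely needed and deserves at least a reference rather than a parenthetical; second, in your combinatorial lemma you should make explicit the convention that a polynomial of total degree $\le -1$ is the zero polynomial, so that the base case $\iF=0$ (empty support) fits cleanly into the induction.
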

	
	\begin{dfn} \cite[Section 2, Definition 1]{MR206009}
		Suppose that $\dim \mathrm{Supp}(\iF) \leq r $, then define the intersection number $ 	(\iF; \iL_{1} \cdot \ldots \cdot \iL_{r})$ to be the coefficient of the term $x_{1} \cdots x_{r}$ in the polynomial $P(x_{1}, \dots, x_{r})$ as in Theorem~\ref{Snapper}. In particular, if $\dim \mathrm{Supp}(\iF) < r$ then $ 	(\iF; \iL_{1} \cdot \ldots \cdot \iL_{r}) = 0$, since the degree of $P(x_{1}, \dots, x_{r})$ is strictly less than $r$ by Theorem~\ref{Snapper}.
	\end{dfn}
	
	For any coherent sheaf $\iF$ with $\dim (\mathrm{Supp}(\iF)) \leq r$, this defines an integer valued multi-linear form on Pic$(Y)^{r}$. If a line bundle $\iL$ is defined by a Cartier divisor $D$, then we sometimes write $D$ instead of $\iL$ in the intersection form. We will use the following properties of the intersection numbers:
	
	\begin{Pn} \label{interpn} Let $Y$ be a proper scheme over a field $L$ of dimension $d$. Let $\iL_{1}, \dots, \iL_{r}$ be line bundles on $Y$, $D$ an effective Cartier divisor and $\iF$ a coherent sheaf on $Y$. Suppose $\dim \mathrm{Supp}(\iF) = r$. Then:
		\begin{enumerate}
			
			\item \label{intersection1} If $\iF$ is a locally free sheaf (in this case $r=d$), then $$(\iF; \iL_{1} \cdot \ldots \cdot \iL_{d-1} \cdot \cO_{Y}(D )) =  (\iF|_{D} ; \iL_{1}|_{D} \cdot \ldots \cdot \iL_{d-1}|_{D}) .$$

			\item	\label{intersection2}	 If $\iF^{\prime} $ and $\iF^{\prime \prime} $ are two other coherent sheaves and there is an exact sequence
			$$ 0 \to \iF^{\prime} \to \iF \to \iF^{\prime \prime} \to 0 ,  $$
			then $(\iF; \iL_{1} \cdot \ldots \cdot \iL_{r}) = (\iF^{\prime}; \iL_{1} \cdot \ldots \cdot \iL_{r}) + (\iF^{\prime \prime}; \iL_{1} \cdot \ldots \cdot \iL_{r}).$

			\item \label{intersection3} If $\iL$ is any line bundle on $Y$ and $P(x)$ is the polynomial such that $P(n) = \chi(\iF \otimes \iL^{n})$ for $n \in \ZZ$ (which exists by Theorem~\ref{Snapper} above), then $$ (\iF; \underbrace{\iL\cdot \ldots \cdot \iL}_{r \text{ times}})= \alpha \ r! $$
			where $\alpha$ is the coefficient of $x^{r}$ in $P(x)$.
			
			\item \label{intersection4} Let $V = \mathrm{Supp}(\iF)$ and $V_{1}, \dots, V_{s}$ its irreducible components. Let $l_{i} = \mathrm{length}(\iF \otimes \cO_{V_{i}} )$ where $\cO_{V_{i}}$ is the stalk of $\cO_{Y}$ at the generic point of $V_{i}$, then assuming $r \geq \dim(V)$, we have
			$$ (\iF;\iL_{1} \cdot \ldots \cdot \iL_{r}) = \sum _{i=1} ^{s} l_{i} \ (\iL_{1}|_{V_{i}} \cdot \ldots \cdot \iL_{r}|_{V_{i}})	.	$$
			In particular, if $\iF$ is an invertible sheaf, then $(\iF; \iL_{1} \cdot \ldots \cdot \iL_{d}) = (\iL_{1} \cdot \ldots \cdot \iL_{d})$. 
			
			\item  \label{intersection5} Let $\pi: Y^{\prime} \to Y$ be a map of finite type of integral projective varieties of dimension $d$, then $ (\pi^{*} \iL_{1} \cdot \ldots \cdot \pi^{*} \iL_{d})_{Y^{\prime}} =  \text{deg} (\pi) \ (\iL_{1} \cdot \ldots \cdot \iL_{d})_{Y}$.
		\end{enumerate}
		
	\end{Pn}
	
	\begin{proof} Parts (1), (2), (4) and (5) are proved in \cite{MR206009}, Section 2, Propositions 4, 3, 5 and 6 respectively. Note that even though it is assumed that the ground field $L$ is algebraically closed in \cite{MR206009}, the hypothesis is not required in Section 2. For instance, see \cite{MR1841091}.
	
	(3): If $Q(x_{1}, \dots, x_{r})$ is the polynomial such that $Q(m_{1}, \dots, m_{r}) = \chi(\iF \otimes \iL^{m_{1}} \otimes \ldots \otimes \iL^{m_{r}})$, then $Q(x_{1}, \dots, x_{r}) = P(x_{1} + \dots + x_{r})$. Since $P(x)$ has a degree at most $r$, the coefficient of $x_{1} \dots x_{r}$ in $P(x_{1} + \dots + x_{r})$ is $\alpha \, r!$ which by definition is the required intersection number.
	\end{proof}
	
	
	\section{The Polynomial Nature of Multiplicities}{\label{Polynomial}}
	In this section, we prove Theorem~\ref{thm3.1}  on the polynomial behavior of multiplicities of jumping numbers. We begin with some preliminary definitions.
	
	\begin{notation}
	We fix the following notation throughout this section: Let $X$ be a smooth variety of dimension $d$ over $k$. We fix a closed subscheme $Z$ of $X$ and an irreducible component $Z_{1}$ of $Z$. Let $x$ be the generic point of $Z_{1}$ in $X$ and $(A, \fm, L)$ the local ring at $x$ in $X$. The Krull dimension of $A$--- or equivalently, the codimension of $Z_1$ in $X$--- will be denoted $h$. Let $\fa \subset \cO_{X}$ denote the ideal of $Z$, and observe that the stalk of $\fa$ at $x$ is an $\fm-$primary ideal of $A$. Abusing notation, we often write $\fa$ and  $\iI(\fa^{c})$ for the stalks of the ideal sheaf $\fa$ and the multiplier ideal at $x$ respectively and think of these as ideals in $A$.
	\end{notation}
	
	\begin{dfn} \cite{MR2068967} \label{multidef}
		For any real number $c>0$ of $\mathfrak a$ at $x$, the \emph{multiplicity of $c$ at $x$} is defined to be
		\begin{equation}\label{multdef} m(c) :=	\lambda( \iI(\fa^{c -\varepsilon})_{x}/\iI(\fa^{c})_{x}),	
		\end{equation}
		for sufficiently small positive $\varepsilon.$   Here, $\lambda$ denotes the length as an $A-$module.
	\end{dfn}
	
	The multiplicity is well-defined because $\iI(\fa^{c})$ is either $A$ or an $\fm-$primary ideal for all $c$ since $\fa$ is $\fm-$primary and $\iI(\fa^{c}) \supset \iI(\fa^{\lceil c \rceil }) \supset \fa^{\lceil c \rceil}$. Note that even though we define $m(c)$ for any positive real number $c$ by expression (\ref{multdef}), it will be non-zero if and only if $c$ is a jumping number, by the definition of jumping number. 
	
	Our focus in this section is on the function $m(c+n)$ where $c>0$ is any fixed real number and $n$ varies over the natural numbers $\ZZ_{\geq 0}$. So, we define the function
	$$ f_{c}(n) := m(c+n)	.		$$
	
	The main theorem about $f_{c}(n)$ is the following:
	
	\begin{thm} \label{thm3.1}
		Let $Z$ be a closed subscheme of a smooth variety $X$ and $Z_{1}$ be an irreducible component of $Z$. Then for each $c >0$, the function $f_{c}(n)$ is a polynomial function of $n$ of degree less than the codimension $h$ of $Z_{1}$ in $X$.
	\end{thm}
	
	Recall that a function $g: \RR_{> 0} \to \RR$ is called a \emph{quasi-polynomial} if $g$ can be written as
	$$ g(x) = a_{r}(x) x^{r} + \dots + a_{0}(x),       $$
	where each $a_{i}(x)$ is a periodic function of $x$ with integral period.
	
	\begin{Cor}\label{quasipoly}
	
	Let $Z$ be a closed subscheme of a smooth variety $X$. The multiplicity function $m(c)$ of $Z$ along one of its components $Z_{1}$ is a quasi-polynomial in $c$.
	
	\end{Cor}
	
	\begin{proof}
	    By Theorem~\ref{thm3.1}, for each $c$ in the interval $(0,1]$, the multiplicity $m(c+n)$ can be written as a polynomial $P_{c}(c+n)$ of degree less than $h$, where $h$ is the codimension of $Z_{1}$ in $X$. So we can write $m(c)$ as
	    $$ m(x) = a_{h-1}(x) x^{h-1} + \dots + a_{0}(x)     $$
	   where $a_{i}(x)$ is  the coefficient of $x^{i}$ in the polynomial $P_{c}$, for $c$ the fractional part $c = x-\lfloor x\rfloor$ of $x$. Note that each $a_i$ is a periodic function with period 1.
	\end{proof}

	Before we prove the theorem, it will be convenient to introduce some notation and the notion of \emph{jumping divisor} of a candidate jumping number:
	\begin{dfn} \label{jumpingdiv} Let $c$ be  positive real number $c$ and  $\fa$ an  ideal sheaf on the smooth variety $X$.
	Fix a log resolution $\mu: Y \to X$ of $\fa$, and let $F$ be the unique effective exceptional divisor such that $\fa \cdot \cO_{Y} = \cO_{Y}(-F)$. 
		For $\varepsilon > 0$ small enough, the divisor $ \lfloor cF \rfloor - \lfloor (c - \varepsilon ) F \rfloor$ is reduced and does not change as $\varepsilon \to 0$. The \emph{jumping divisor} $E^c$ of $c$ is 
	\begin{equation}\label{round}
		E^{c} =  \lfloor cF \rfloor - \lfloor (c - \varepsilon ) F \rfloor,
		\end{equation}
		where $\varepsilon$ is a sufficiently small positive number. 
	\end{dfn}
	
	The jumping divisor  $E^c$ is a reduced non-zero divisor whenever
	$c$ is a candidate  jumping number of $\fa$, by definition of candidate jumping number (see Section~\ref{candidate}).  Otherwise,  $E^{c}$ is zero.
	Further note that $E^{c} = E^{c+n}$ for any natural number $n$, as  follows from formula (\ref{round}) by properties of rounding down.

	\begin{rem}
	The notion of a jumping divisor was introduced as the \emph{maximal jumping divisor} in \cite{MR3510908} along with its \emph{minimal} variant in the two dimensional case. For simplicity, we  drop the adjective `maximal' from the name and do not discuss the minimal jumping divisor here.
	\end{rem}

	\begin{rem} \label{exceprem}
		Let $\mu: Y \to X$ be any log resolution of the closed subscheme $Z$ defined by $\fa$. Let   $\fa \cdot \cO_{Y} = \cO_{Y}(-F)$. Since we do not assume $X$ to be a projective variety (in fact, we will assume it to be affine), the divisor $F$ or its components are not necessarily projective varieties. However, we can realize each  prime component of the divisor $F$  as a projective variety over a suitable field, after a suitable base-change, as guaranteed by the next lemma.
		\end{rem}
		
		\begin{lem} \label{exceplem}
		With notation as in Remark~\ref{exceprem},
		\begin{enumerate}
		    \item The push-forward  $\mu_*\cO_{Y}(-F)$ is  the integral closure of the ideal $\fa$. 
		    \item If $E \subset Y $ is an irreducible component of $F$ then $\mu(E)$ is contained in $Z$.  Conversely, all prime divisors that are mapped inside $Z$ by $\mu$ occur as irreducible components of $F$.
		   \item If $Z$ is supported at a closed point $x$, then each component of $F$ is a smooth projective variety over $k$ and is contracted by $\mu$ to $x$.
		   
		   \item More generally, if $E \subset Y$ is a prime component of $F$ and $p \in Z$ is the center of $E$ on $X$ (i.e. generic point of $\mu(E)$), then
		$$ E^{\prime} = E \times_{X} \Spec(\cO_{X,p}) $$
		is a smooth projective variety over  the residue field $\kappa(p)$ of $p\in X$.
		
		\item The dimension of $E^{\prime}$ is $h-1$ where $h$ is the dimension of $\cO_{X,p}$. 
			\end{enumerate}
	\end{lem}
	\begin{proof}
	    \begin{enumerate}
	        \item See \cite[Remark 9.6.4]{MR2095472}.
	        \item The forward implication follows from part (1) and the converse from the fact that $\mu^{-1} (\fa) = \cO_{Y}(-F)$.
	        \item [(3)] This is a special case of part (4).
	        \item [(4)] Since we are assuming our log resolutions are projective maps, $E^{\prime}$ is projective over $\Spec(\cO_{X,p})$. Further, the fiber over $p$ is a projective variety over $\kappa(p)$. Hence, so is every closed subvariety of the fiber. Since $E^{\prime}$ is contracted to $x \in \Spec(\cO_{X,p}) $ by $\mu$, $E^{\prime}$ is a projective variety over $\kappa(p)$. Smoothness follows from the fact that $E$ is smooth over $k$.
	        
	       \item [(5)] Let $\kappa(E) = \kappa(E^{\prime})$ denote the function field (over $k$) of the variety $E$ and $\kappa(p)$ denote the residue field of $X$ at $p$. Then,
		$$ \dim_{\kappa(p)} E^{\prime} = \text{tr. deg.}_{\kappa(p)} \kappa(E^{\prime}) = \text{tr. deg.}_{k} \kappa(E^{\prime}) - \text{tr. deg.}_{k} \kappa(p) = d-1 - \dim_{k} (Z_{1}) = h-1$$
		where $Z_{1}$ denotes the closure of $p$ in $X$ and tr. deg.$_{\kappa}$ denotes the transcendence degree of a field over $\kappa$.\end{enumerate}
		\end{proof}
	
	\begin{notation} \label{localize}
	    For any effective divisor $D \subset F$ and a point $p$ in $X$, we denote by $D_{p}$ the scheme $D \times_{X} \Spec(\cO_{X,p})$ over $\Spec(\cO_{X,p})$. In particular $E^{c}_{x}$ denotes the base-change of the jumping divisor $E^{c}$ to $\Spec(\cO_{X,x})$. Note that $E^{c}_{x}$ can be empty. But in that case, it follows from equation \ref{secondexactseq} below that the multiplicity of $c$ at $x$ is zero, hence $c$ can not be a jumping number.
	\end{notation}
	\vspace{0.1in}
	
	We can now prove Theorem~\ref{thm3.1}.
	
	\begin{proof}[Proof of Theorem~\ref{thm3.1}]
		Since the multiplicities $m(c)$ depend only on the stalk of the multiplier ideal at $x$, we may assume $X$ is the local affine scheme $\Spec(\cO_{X,x})$.

	 Let $\mu: Y \to X$ be any log resolution of the closed subscheme $Z$ defined by the ideal $\fa$. Let $\fa \cdot \cO_{Y} = \cO_{Y}(-F)$. We now prove that for any $c> 0$, the multiplicity $m(c)$ can be calculated as an Euler characteristic on $Y$. To see this, we start with the exact sequence
		$$ \begin{tikzcd}
			0	 \arrow[r] & \cO_{Y}(-E^{c})  	\arrow[r] & \cO_{Y} 	\arrow[r] & \cO_{E^{c}} 	\arrow[r] & 0	
		\end{tikzcd} . $$
	Let $ \iL$ be the line bundle on $Y$ corresponding to the divisor $ K_{Y|X} +E^{c} - \lfloor cF \rfloor   = K_{Y|X} - \lfloor (c - \varepsilon)F \rfloor$. Tensoring with the invertible sheaf associated to $\iL$, we get 
		\begin{equation} \label{exactseq}
			\begin{tikzcd}
				0	 \arrow[r] & \cO_{Y}(\iL) \otimes \cO_{Y}(-E^{c})  	\arrow[r] & \cO_{Y} (\iL)	\arrow[r] & \cO_{E^{c}}(\iL|_{E^{c}}) 	\arrow[r] & 0	
			\end{tikzcd} .
		\end{equation}
		
		Now, observing that $\cO_{Y}(\iL) \otimes \cO_{Y}(-E^{c}) \isom  \cO_{Y}(K_{Y|X} - \lfloor cF \rfloor) $, the Local Vanishing Theorem (Theorem~\ref{localvanishing}) tells us that the sequence remains exact after applying $\mu_{*}$. Thus we have the exact sequence
		\begin{equation} \label{secondexactseq}
		    \begin{tikzcd}
			0 	\arrow[r] & \iI(\fa^{c}) \arrow[r] &  \iI(\fa^{c -\varepsilon}) 	\arrow[r] &  H^{0}(E^{c}, \iL|_{E^{c}})  \arrow[r] &  0 
		\end{tikzcd} ,
		\end{equation} 
		where the first map is just the inclusion of the ideal $\iI(\fa^{c}) $ inside $\iI(\fa^ {c - \varepsilon})$.
		
		
		Since $X$ is affine, applying the Local Vanishing Theorem (Theorem~\ref{localvanishing}) again, we have that the first two sheaves in the short exact sequence (\ref{exactseq}) have vanishing higher cohomology. Using the long exact sequence for (\ref{exactseq}) of sheaf cohomology groups, we get:
		\begin{equation} \label{vanishingcoh}
		    H^p(E^{c}, \iL|_{E^{c}}) = 0  \quad   \text{for all $p>0$.}
		\end{equation}
		
		 Since $E^{c}$ is reduced, by Lemma~\ref{exceplem}, it is a projective variety over $L$ (the residue field at $x$). Putting this together with equations (\ref{exactseq}) and (\ref{vanishingcoh}), we get
			\begin{equation} \label{eulerchar}
			m(c) =  	\lambda( \iI(\fa^{c -\varepsilon}) /\iI(\fa^{c}))  = \lambda(H^{0}(E^{c} , \iL|_{E^{c}}))	 = \dim_{L}(H^{0}(E^{c}, \iL|_{E^{c}})) = \chi_{L} (\iL |_{E^{c}}),
		\end{equation}  
where the third equality holds because the $\cO_{X,x}-$module $H^{0}(E^{c} , \iL|_{E^{c}})$ is already a vector space over $L$, since $E^{c}  $ is a projective variety over $L$.

		
		By equation (\ref{eulerchar}) above, we have 
		\begin{equation} \label{euler}
			f_{c}(n) =  \chi_{L} \Big( \cO_{Y}\big( K_{Y|X} - \lfloor (c- \varepsilon) F \rfloor \big) |_{E^{c}} \otimes \cO_{Y}\big( -nF \big) |_{E^{c}} \Big)
		\end{equation}
		for all $n \geq 0$. Since we are fixing $c$ and varying $n$, we may apply Theorem~\ref{Snapper} on the complete scheme $E^{c}$ over $L$ with $r = 1$, $\iF = \cO_{Y}\big( K_{Y|X} - \lfloor (c- \varepsilon) F \rfloor \big) |_{E^{c}}$ and $\iL_{1} = \cO_{Y}\big( -F \big) |_{E^{c}}$, to get a polynomial $Q_{c}(x)$ of degree at most the dimension of $E^{c}$ such that $Q_{c}(n) = \chi\Big( \iF \otimes \iL_{1}^{n} \Big) = f_{c}(n)$ for all $n  \geq 0 $. The proof is now complete by noting that the dimension of $E^{c}$ is equal to $h-1$ by Lemma~\ref{exceplem}, where $h$ is the codimension of $Z_{1}$.
		\end{proof}

	We can also explicitly determine the coefficient of $n^{h-1}$ in the polynomial $m(c+n)$:
	\begin{thm}
		\label{rhoc} Fix $c>0$.
		With notation as before, the coefficient $\rho_{c}$, of the term $n^{h-1}$ in the polynomial $f_{c} (n)$ describing the multiplicities $m(c+n)$ can be computed on the log resolution $Y$ using the formula:  
		\begin{equation}\label{FE}
		    \rho_{c} = \frac{(-1)^{h-1}}{(h-1)!} 	(\underbrace{ F|_{E^{c} _{x}} \cdot \ldots \cdot F|_{E^{c}_{x}}}_{h-1 \textrm{ times}}).
		\end{equation}
		where $E^{c}_{x} = E^{c} \times_{X} \Spec(\cO_{X,x})$ is a projective variety over $L$. In case the subscheme $Z$ is supported only at a closed point $x$ in $X$, the formula for $\rho_{c}$ may be written as follows (where $d$ is the dimension of $X$):
		\begin{equation} 
			\rho_{c} = \frac{(-1)^{d-1}}{(d-1)!} 	(\underbrace{ F \cdot \ldots \cdot F }_{d-1 \textrm{ times}} \cdot E^{c})
		\end{equation}   
		
	\end{thm}
	
	\begin{proof}
		Let $\iF_{c} = \cO_{Y}(K_{Y|X} - \lfloor (c- \varepsilon) F \rfloor)$ and $\iL = \cO_{Y}(-F)$. Then $f_{c}(n) = \chi(\iF|_{E^{c}_{x}} \otimes \iL|_{E^{c}_{x}} ^{n})$ by (\ref{euler}). Using part (\ref{intersection3}) of Proposition~\ref{interpn},  we have, $\rho_{c} =\frac{ (\iF_{c}|_{E^{c}_{x}}; \iL|_{E^{c}_{x}} \cdot \ldots \cdot \iL|_{E^{c}_{x}})}{(h-1)!}$.
		Since $\iF_{c}$ is a line bundle on $Y$ and by definition $\iL = \cO_{Y}(-F)$, using part (\ref{intersection4}) of Proposition~\ref{interpn}, we have:
		\begin{equation*} 
			\rho_{c} = (-1)^{h-1} \frac{	( F|_{E^{c} _{x}} \cdot \ldots \cdot F|_{E^{c} _{x}})}{(h-1)!} .
		\end{equation*}
		
		If $Z$ is supported only at a closed point $x$, then $E^{c} _{x} = E^{c}$ and is projective over $k$. By part (\ref{intersection1}) of Proposition~\ref{interpn}, we can compute $\rho_{c}$ on any projective closure of $Y$ as: $$\frac{ (\iL|_{E^{c}} \cdot \ldots \cdot \iL|_{E^{c}})}{(d-1)!} = \frac{ (\iL \cdot \ldots \cdot \iL \cdot E^{c})}{(d-1)!} = (-1)^{d-1} \frac{ ( F \cdot \ldots \cdot F \cdot E^{c})}{(d-1)!} .$$ 
			\end{proof}
	
	\begin{dfn} \label{rhocdef}
		Given $Z$ and $Z_{1}$, we define the \emph{Rees coefficient} of $c > 0 $ (denoted by $\rho_{c}$) to be the coefficient of the term $n^{h-1}$ in the polynomial $f_{c}(n)$. Equivalently, in light of Theorem~\ref{thm3.1}, $\rho_{c}$ is the unique number such that
		$$ m(c+n) = \rho_{c} n^{h-1} + o(n^{h-2}) \quad \text{as $n \to \infty$} 	,	$$ 
	 where $h$ is the codimension of $Z_{1}$ in $X$. \end{dfn}
	The Rees coefficient of a jumping number will be studied in more detail in the next section.
	
	\begin{rem}
		When $\dim (X) =2$ and $Z$ is a point scheme, the polynomial $f_{c}(n)$ has degree at most 1. In this case we have,
		$$ m(c+n) = m(c) + \rho_{c} n,		$$
		where $\rho_{c} = - F \cdot E^{c}$. This recovers the \emph{linear} formula for $m(c+n)$ proved in \cite{MR3558221}.
	\end{rem}
		
	\begin{rem}
		In higher dimensions and $Z$ still a point scheme, the polynomials $f_{c} (n)$ have other coefficients that can be computed as follows: \\
		If $f_{c}(n) = m(c) + \alpha_{1} ^{c} n + \dots + \alpha_{d-1} ^{c} n^{d-1}$, then
		\begin{equation}
			\alpha_{j} ^{c} = \frac{1}{j!} \int \limits _{E^{c}} c_{1}(\iL|_{E^{c}})^{j} \cap \tau_{E^{c}, j} (\iF_{c}|_{E^{c}})  ,
		\end{equation}
		where $\iF_{c} = \cO_{Y}(K_{Y|X} - \lfloor (c- \varepsilon) F \rfloor)$ and $\iL = \cO_{Y}(-F)$, $c_{1}$ denotes the first Chern class of a line bundle and $\tau _{E^{c},j}$ denotes the degree $j$ component of the Todd class of a sheaf. This formula comes from equation \ref{euler} and the Riemann-Roch Theorem for singular varieties. See \cite[Example 18.3.6]{MR1644323} for the details.
	\end{rem}
	
	The interpretation of the multiplicity $m(c)$ as the dimension of global sections as in (\ref{eulerchar}) also implies that the function under consideration, $f_{c}(n)$ is a non-decreasing function  which we now note:
	
	\begin{Pn} \label{incr}
		Let $Z$ be a closed subscheme of a smooth variety $X$ and $Z_{1}$ an irreducible component of $Z$ with generic point $x$. Fix any $c > 0$, then $m(c+1) \geq m(c)$.		
		
	\end{Pn} 
	
	\begin{proof}
		As before, we assume $X$ is the local scheme $\Spec(\cO_{X,x})$. It is sufficient to deal with the case when $c$ is a jumping number since otherwise $m(c) = 0$ and $m(c+1) \geq 0$ by definition. So we assume that $c$ is a jumping number, hence $m(c) > 0$. By (\ref{eulerchar}), we need to show that 
		$$\dim_{L} H^{0}\Big( E^{c}, \cO_{Y}(K_{Y|X} - \lfloor (c+1 - \varepsilon)F \rfloor)|_{E^{c}}\Big) \geq \dim_{L} H^{0} \Big(E^{c},  \cO_{Y}(K_{Y|X} - \lfloor (c - \varepsilon)F \rfloor)|_{E^{c}}\Big).$$
		Denoting by $\iF$ and $\iL$ the invertible sheaves $\cO_{Y}(K_{Y|X} - \lfloor (c - \varepsilon)F \rfloor)$ and $\cO_{Y}(-F)$ on $Y$ respectively, we need to prove:
		$$\dim_{L} H^{0}\Big( E^{c}, \iF|_{E^{c}} \otimes \iL|_{E^{c}} \Big) \geq \dim_{L} H^{0} \Big( E^{c}, \iF|_{E^{c}} \Big).$$
		But, since $\fa \cdot \cO_{Y} = \cO_{Y}(-F)$, $\iL$ is generated by its global sections (by the generators of $\fa$) on $Y$, the same is true for $\iL|_{E^{c}}$ on $E^{c}$. In particular, $\iL|_{E^{c}}$ has a non-zero global section. Hence, choosing a non-zero global section of $\iL|_{E^{c}}$, we have an injective map:
		$$ \cO_{E^{c}} \hookrightarrow \cO_{E^{c}}(\iL).		$$ 
		Tensoring with $\iF|_{E^{c}}$ preserves injectivity since $\iF$ was invertible. Hence, we have
		$$	\cO_{E^{c}}(\iF) \hookrightarrow \cO_{E^{c}}(\iL \otimes \iF).	$$
		Since taking global sections also preserves injectivity, our claim is proved.
	\end{proof}
	
	\begin{rem}
		This proposition implies that if $c$ is a jumping number of $Z$ at $Z_{1}$, then so is $c+1$. This also follows directly from the definition of multiplier ideals and jumping numbers.
		Though this proposition says that if we fix a `$c$' then $m(c+n)$ is non-decreasing with $n$, this does not imply that the multiplicity $m(c)$  increases with $c$ (i.e., as $c$ varies over the jumping numbers). In fact, this is not true. As we will see in the next section, $m(c+n)$ increases at different rates for different jumping numbers $c$.
	\end{rem}

		Theorem~\ref{thm3.1} can be thought of as a finiteness statement for multiplier ideals of a subscheme at its irreducible components.
		For example, we can use Theorem~\ref{thm3.1} to deduce a well-known periodicity result for jumping numbers:

	\begin{Pn} \label{periodic} Let $c$ be a positive real number, and let $Z_1$ be a codimension $h$ components of a subscheme $Z$ of a smooth variety $X$.
		If $c > h-1$, then $c$ is a jumping number of $Z$ at $Z_{1}$ if and only if $c+1$ is a jumping number of $Z$ at $Z_{1}$.
	\end{Pn}
	
	\begin{proof}
		By Proposition~\ref{incr}, if $c$ is a jumping number, then $c+1$ is also a jumping number (without any assumptions on $c$). So it is enough to prove that if $c+1$ is a jumping number of $Z$ at $Z_{1}$ then so is $c$. If $c$ is not a jumping number, then by Proposition~\ref{incr}, $m(c-n) = 0$ for all $n$ such that $ 0 \leq n \leq h-1$. By Theorem~\ref{thm3.1}, $m(c+n)$ is a polynomial of degree at most $h-1$. Since $m(c+n)$ has $h$ zeroes, this implies that $m(c+n)$ is identically zero. So $c+1$ can not be a jumping number either.
	\end{proof}

	\section{Properties of the Rees coefficient}
	In this section, we investigate some properties of the \emph{Rees coefficient} of a jumping number (Definition \ref{rhocdef}). In the first subsection \ref{positivity}, we prove some criteria for the positivity of the Rees coefficient. Next, we relate the Rees coefficients to the Hilbert-Samuel multiplicity (Section \ref{Hilbert}). We begin with some preliminary observations about the Rees coefficients.

	

 Recall that the Rees coefficient $\rho_c$  of $c$ is defined to be the coefficient of $n^{h-1}$ in the polynomial $m(c+n)$, where $m(c)$ denotes the multiplicity of $c$ of a closed subscheme $Z$ along one of its irreducible components $Z_{1}$ and $h$ denotes the codimension of $Z_{1}$ in $X$.	
	
	\begin{Pn} Let $X$ be a smooth variety and fix a closed subscheme $Z$ of $X$ and an irreducible component $Z_{1}$ of $Z$. Fix a positive real number $c$. Then the Rees coefficient satisfies the following:
	\begin{enumerate}
		\item $\rho_{c} \geq 0$ for all $c > 0$.
		\item $\rho_{c} \in \frac{1}{(h-1)!} \NN_{\geq 0}$.
	
		\item $\rho_{c} = \rho_{c+n}$ for any positive integer $n$.
			\item  $\rho_{c} = 0 $ if $c$ is not a candidate jumping number of $\fa$ (see Section~\ref{candidate}).
		\item	$\rho_{c}$ is positive implies that $c+n$ is a jumping number at $Z_1$ for some $n$.
	\end{enumerate}
	\end{Pn}
	\begin{proof}
	    By Theorem~\ref{thm3.1}, $m(c+n)$ is a polynomial in $n$ of degree at most $h-1$. Since $\rho_{c}$ is the coefficient of $n^{h-1}$, parts (1), (2) and (3) follow from the fact that $m(c+n)$ is a non-negative integer for all $n$. Parts (4) and (5) follow from the fact that $m(c+n)$ is positive exactly when $c+n$ is a jumping number. In particular, if $m(c+n)>0$ for some $n$, then $c$ (equivalently $c+n$) must be a candidate jumping number.
	\end{proof}
	Since $ \rho_{c} $ is the same as $\rho_{c+n}$ for any natural number $n$, we think of $\rho_{c}$ as a $\QQ$-valued function on $\QQ/\ZZ$. This function is an invariant of the subscheme $Z$ along a component $Z_{1}$ and describes interesting properties of jumping numbers at $Z_1$ (or rather their classes in $\QQ/\ZZ$). We turn to explaining these properties.

	\subsection{Criteria for Positivity} \label{positivity}
	We first discuss the issue of when the Rees coefficient $\rho_{c}$ is positive. More precisely, for any closed subscheme $Z$ and an irreducible component $Z_1$, we ask for which classes in $\QQ/\ZZ$ of jumping numbers $c$ is the Rees coefficient $\rho_{c}$ positive? Equivalently, for which jumping numbers $c$ at $Z_1$, does the multiplicity $m(c+n)$ along $Z_1$ grow like $n^{h-1}$ as $n \to \infty$ (where $h = \codim_{X} Z_{1}$)? In this subsection, we prove several criteria for when the Rees coefficient is positive. The following Theorem~\ref{mainthm41} is a summary of the results of this section: 
	
	\begin{thm} \label{mainthm41}
		Let $Z$ be a closed subscheme of $X$ (with ideal $\fa$) and $Z_{1}$ be an irreducible component of $Z$ with generic point $x$ in $X$. Let $h$ be the codimension of $Z_{1}$ in $X$ and fix any $c > 0$. Then the following are equivalent:
		\begin{enumerate}
			\item [(i)] The Rees coefficient $\rho_{c}$ is positive.
			\item [(ii)]  The polynomial $m(c+n)$ has degree $h-1$  in $n$.
			\item [(iii)] The jumping divisor $E^{c}$ (Definition~\ref{jumpingdiv}) contains a \emph{Rees valuation} of $\fa$ centered at $x$ on some (equivalently any) log resolution of $\fa$.
			\item [(iv)]  $c+h-1$ is a jumping number \emph{contributed by} some \emph{Rees valuation} of $\fa$ centered at $x$.
			\item [(v)] The class of $c$ in $\QQ/\ZZ$ is \emph{contributed by} some \emph{Rees valuation} of $\fa$ centered at $x$.
		\end{enumerate}

	\end{thm}
	
	We first explain the statement of the theorem and the various terms appearing in it before giving the proof.
	
	\medskip
	
	\subsubsection{\textbf{Jumping numbers contributed by divisors}} \label{contributedsection} The notion of a jumping number \emph{contributed by a divisor} on a log resolution was defined in \cite{MR2389246} by Smith and Thompson. This notion was studied further in \cite{MR2592954} and \cite{MR3737832}. This definition naturally generalizes to classes of jumping numbers in $\QQ/\ZZ$. In this terminology, Theorem~\ref{mainthm41} characterizes the classes \emph{contributed by} the Rees valuations on any log resolution as exactly the classes whose Rees coefficient is positive (hence the choice of name). We now review the definition of contribution by a divisor here.  Recall that $E^{c}_{x}$ is the base-change of the jumping divisor $E^{c}$ (Definition~\ref{jumpingdiv}) defined as $E^{c}_{x} = \big( \lfloor cF \rfloor - \lfloor (c - \varepsilon ) F \rfloor \big) \times_{X} \Spec(\cO_{X,x})$.
	\begin{dfn}\label{defncontri}
		
		Let $\mu: Y \to X$ be a log resolution of an ideal $\fa \subset \cO_{X}$ with $\fa \cdot \cO_{Y} = \cO_{Y}(-F)$ and $E \subset F$ be a prime  divisor. Then for any point $x$ of $X$, a jumping number $c$ of $\fa$ at $x$ is said to be \emph{contributed by $E$} if:
		\begin{enumerate}
		    \item $E_{x}$ is non-empty,
		    \item $E_{x} \subset E^{c}_{x}$ and
		    \item  $ \iI(\fa^{c})_{x} = \mu _{*} (\cO_{Y}(K_{Y|X} - \lfloor cF \rfloor))_{x} \subsetneqq \mu _{*} (\cO_{Y}(K_{Y|X} - \lfloor cF \rfloor + E))_{x} $.
		\end{enumerate} 
		
		We say that a class $[c] \in \QQ/\ZZ$ is \emph{contributed by $E$} if $c+n$ is contributed by $E$ for some natural number $n$.
	\end{dfn}
	The condition that $E_{x}$ is non-empty is saying that $E$ is centered on $\Spec(\cO_{X,x})$ and $E_{x} \subset E^{c}_{x}$ is just saying that $c$ is a candidate jumping number for $E$. We also note that contribution by a prime divisor depends only on the valuation defined by $E$ and not on the chosen log resolution. So we say that a jumping number is ``contributed by a valuation $\nu$" if it is contributed by a divisor on a log resolution whose associated valuation is $\nu$.
	
	\medskip
	
	\subsubsection{\textbf{Rees valuations:}} \label{reesvaldfn} Let $\fa \subset \cO_{X}$ be an ideal. If $\nu: \tilde{X} \to X$ denotes the normalization of the blow-up of $X$ along $\fa$, then $\fa \cdot \cO_{\tilde{X}} = \cO_{\tilde{X}}(-E)$, for an effective divisor $E$ on $\tilde{X}$. Then the valuations corresponding to the prime components of $E$ are called the \emph{Rees valuations} of $\fa$. 
	
	Let $(A,\fm)$ be a local ring and $\fa$ be an ideal. The set of Rees valuations of $\fa$ is a minimal set of discrete, rank one valuations $\nu_{1}, \dots , \nu_{r}$ of $A$  satisfying the property that:
	$$ \text{For all $n \in \NN$, the integral closure $\overline{\fa^{n} } = \bigcap_{i} \big( \fa^{n} V_{i} \cap A \big) $ where $V_{i}$  is the valuation ring of $\nu_{i}$}. $$ 
	In other words, the Rees valuations are a minimal set of valuations that determine the integral closure of all the powers of the ideal $\fa$.  We refer to \cite[Chapter 10]{MR2266432} for details about Rees valuations. This is also explained in \cite[Section 9.6]{MR2095472}.
	
	Given the normalized blowup $\tilde{X}$, or more generally any normal variety $Y$ mapping properly and birationally to $X$, we abuse terminology by saying ``$E$ is a Rees valuation of $\fa$" to refer to a prime divisor $E$ on $Y$ corresponding to a Rees valuation of $\fa$.

\medskip

	We now turn to the proof of Theorem~\ref{mainthm41}. 
	
	\medskip
	
	\paragraph{\textit{Proof of Theorem~\ref{mainthm41}:}}
	We first note the following easy implications in the theorem:
	The equivalence between (i) and (ii) is clear from Definition~\ref{rhocdef} of the Rees coefficient. That (iv) implies (iii) follows from Definition~\ref{defncontri} of jumping numbers contributed by divisors (so does (v) implies (iii)). We have that (iv) implies (v) immediately from Definition~\ref{defncontri} of classes of jumping numbers contributed by a divisor.
	So to complete the proof, we will prove that (i) is equivalent to (iii) and (iii) implies (iv). These will be proved as the two main theorems of this subsection.
	
	\medskip
	
	We first prove part (i) is equivalent to part (iii) of Theorem~\ref{mainthm41}:
	
	\begin{thm}\label{reesval}
		Let $Z$ be a closed subscheme of $X$ and $Z_{1}$ an irreducible component of $Z$ with generic point $x$ in $X$. Fix any $c > 0$. Then, the Rees coefficient $\rho_{c}$ at $x$ is positive if and only if on some (equivalently every) log resolution, the jumping divisor $E^{c}$ (Definition~\ref{jumpingdiv}) contains a Rees valuation of $Z$ centered at $x$.
		
	\end{thm}
	
	Let $\mu:  Y \to X$ be a log resolution of the ideal $\fa$ with $\fa \cdot \cO_{Y} = \cO_{Y}(-F)$.  When $X$ is a surface and $\fa$ is supported at a closed point, it follows from the results of Lipman, from the paper \cite{MR276239} that, for any irreducible exceptional curve $E \subset Y$, $(-F \cdot E) $ is non-negative and is positive exactly when $E$ corresponds to a Rees valuation of $\fa$. It follows from the formula \ref{FE} that the Rees coefficient $\rho_{c} $ is positive if and only if a Rees valuation appears in the jumping divisor $E^{c}$. Theorem~\ref{reesval} can be thought of as a generalization of Lipman's result in this context to higher dimensions. To prove this theorem, we need a key lemma which is a generalization to higher dimensions of a version of Lemma 21.2 from \cite{MR276239}.
	
	\begin{lem}\label{thm41}
		
		If $E \subset Y$ is an irreducible component of $F$ (and $x$ is the generic point of $Z_{1}$, an irreducible component of $Z$), then the following intersection number is non-negative:
		$$ (\underbrace{-F|_{E_{x}} \cdot \ldots \cdot -F|_{E_{x}}}_{h-1 \text{ times}}) \geq 0, $$
		where $E_{x} = E \times _{X} \Spec(\cO_{X,x})$ and $h$ is the codimension of $Z_{1}$ in $X$. Moreover, $(-F|_{E_{x}} \cdot \ldots \cdot -F|_{E_{x}})$ is positive if and only if the divisor $E$ is a Rees valuation of $\fa$ centered at $x$.
		
	\end{lem}
	
	\begin{proof}
		Since the relevant intersection numbers depend only on the local ring at $x$ in $X$, we assume $X$ is the local scheme $\Spec(\cO_{X,x})$. Recall that by Lemma~\ref{exceplem}, $E_{x}$ (henceforth denoted just by $E$) is a smooth projective variety over $L$ (the residue field of $X$ at $x$) of dimension $h-1$. So the intersection numbers make sense. 
		
		The set up of the proof is as follows: let $\tilde{\mu}: \tilde{X} \to X$ be the normalized blow-up of $\fa$, the ideal of $Z$ in $X$. Since $Y$ is normal (it is smooth) and $\fa  \cdot \cO_{Y}$ is locally principal, the universal property of normalization and of blowing-up provides a factorization 
		$$ \mu : Y \xrightarrow{\pi} \tilde{X} \xrightarrow{\tilde{\mu}} X		$$
		i.e., $\mu =  \tilde{\mu} \circ \pi$.
		Let $\iL$ denote the invertible sheaf $\cO_{Y}(-F) = \fa \cdot \cO_{Y}$ on $Y$ and $\tilde{\iL}$ be the invertible sheaf
		$ \fa \cdot \cO_{\tilde{X}}$ on $\tilde X$ and we have $\iL = \pi^{*}(\tilde{\iL})$. Then, since $\tilde{\iL}$ is relatively ample for $\tilde{\mu}$, if $D \subset \tilde{X}$ is any effective divisor, then $\tilde{\iL}|_{D}$ is ample.
		
		Let $B = \{G_{j}\}$ denote the finite set of prime divisors on $\tilde{X}$ in the support of $\fa \cdot \cO_{\tilde{X}}$ and $C = \{E_{i}\}$ denote the  finite set of irreducible components of $F$ on $Y$. Then, a divisor $E_{i}$ from the set $C$ corresponds to a Rees valuation of $\fa$ if and only if $\pi$ maps $E_{i}$ birationally onto some divisor $G_{j}$ from $B$. And if $E_{i}$ does not correspond to a Rees valuation, then $\pi$ maps $E_{i}$ onto a proper subset of $G_{j}$ from some $j$. In any case, for each $E_{i}$ in $B$, we have a $G_{j}$ in $C$ such that $\pi$ restricts to a map
		$$ \begin{tikzcd}
			E_{i} \arrow[d, hook] \arrow[r, "\pi|_{E_{i}}"] & G_{j} \arrow[d, hook] \\
			Y \arrow[r, "\pi"] & \tilde{X}  
		\end{tikzcd} $$ with $\pi$ birational exactly when $E_{i}$ corresponds to a Rees valuation of $\fa$. Now, we want to understand the intersection number $(\iL|_{E_{i}} \cdot \ldots \cdot \iL|_{E_{i}})$. By Proposition~\ref{interpn} (\ref{intersection5}), we have 
		\begin{equation} \label{projection}
		    (\underbrace{ \iL|_{E_{i}} \cdot \ldots \cdot \iL|_{E_{i}}}_{h-1 \text{ times}}) =  \text{deg}(\pi|_{E_{i}}) \times (\underbrace{\tilde{\iL}|_{G_{j}} \cdot \ldots \cdot \tilde{\iL}|_{G_{j}} }_{h-1 \text{ times}}). 
		    	\end{equation}
		    	
		    If $E_{i}$ is not centered at $x$ then $E_{i}$ does not appear on $Y$ since we are over $\Spec(\cO_{X,x})$ and if $E_{i}$ does not correspond to a Rees valuation then $\text{deg}(\pi|_{E_{i}})$ is zero. So, in both cases the intersection number  $(\iL|_{E_{i}} \cdot \ldots \cdot \iL|_{E_{i}})$ is zero. The only case when  $(\iL|_{E_{i}} \cdot \ldots \cdot \iL|_{E_{i}})$ can be non-zero is when $E_{i}$ is a Rees valuation centered at $x$. But in that case, $\pi|_{E_{i}}$ is birational onto $G_{j}$ and $\tilde{\iL}$ is ample on $G_{j}$. Hence by equation \ref{projection}, we get that $ (\iL|_{E_{i}} \cdot \ldots \cdot \iL|_{E_{i}}) $ is positive. This completes the proof of the lemma. 	\end{proof}

	\begin{proof}[Proof of Theorem~\ref{reesval}:]
	The theorem now follows from Lemma~\ref{thm41} and Theorem~\ref{rhoc}:
	$$ 	\rho_{c} =  (-1)^{h-1} \frac{	( F|_{E^{c} _{x}} \cdot \ldots \cdot F|_{E^{c}_{x}})}{(h-1)!} = \sum_{E \subset E^{c}} (-1)^{h-1} \frac{(F|_{E_{x}} \cdot \ldots \cdot F|_{E_{x}})}{(h-1)!}.	$$
	 where the second equality holds due to Proposition~\ref{interpn} (\ref{intersection4}). Lemma~\ref{thm41} tells us that each intersection number on the right-hand side is non-negative and is positive exactly when $E_{x}$ corresponds to a Rees valuation of $\fa$ centered at $x$.
	\end{proof}
	
	Finally, we conclude the proof of Theorem~\ref{mainthm41}, by proving that part (iii) implies (iv).
	
	\begin{thm} \label{contrithm}
		Let $Z$ be a closed subscheme of $X$ (with ideal $\fa$) and $Z_{1}$ an irreducible component of $Z$ with generic point $x$ in $X$. Fix a positive real number $c$. Suppose on some log resolution  $\mu: Y \to X$ of $\fa$, the jumping divisor $E^{c}$ (Definition~\ref{jumpingdiv}) contains a Rees valuation $E$ of $\fa$ centered at $x$, then $c+h-1$ is a jumping number at $x$ contributed by $E$ (where $h=\codim_{X} Z_{1}$).
	\end{thm}
	
	 The proof of this theorem has two main steps:
	\begin{itemize}
	    \item First, we reinterpret the notion of a jumping number \emph{contributed by a divisor $D$} in terms of non-vanishing of the space of global sections of a certain sheaf on the divisor $D$.
	    \item Next, we prove the non-vanishing of the required space of global sections by pushing the sheaf forward to the normalized blow-up of $\fa$ and using a theorem of Mumford's.
	\end{itemize}
	Both steps crucially rely on vanishing theorems, namely the Kawamata-Viehweg vanishing theorem and the Local vanishing theorem (Theorem~\ref{localvanishing}). We now recall the main theorems we need for the proof in the form that we will use them.
	
		\begin{thm}[Kawamata-Viehweg Vanishing Theorem, {\cite[Theorem 9.1.18]{MR2095472}}]   \label{KV vanishing}
		Let $X$ be a smooth projective variety of dimension $n$ and let $N$ be an integral divisor on $X$. Assume that
		$$	N \equiv_{num} B + \Delta $$
		where $B$ is a big and nef $\QQ-$divisor and $\Delta = \sum a_{i} \Delta_{i}$ is a $\QQ-$divisor with simple normal crossings support and such that $ 0 \leq a_{i} < 1$ for each $i$. Then,
		$$		H^{i} (X, \cO_{X} (K_{X} + N))= 0 \quad \text{for all } i > 0. $$
		\end{thm}
		
		\begin{thm}[Mumford's Theorem, {\cite[Theorem 1.8.5]{MR2095471}} ] \label{Mumford}
		    Let $X$ be a projective variety and $\cL$ a globally generated ample line bundle on $X$. Suppose a coherent sheaf $\cF$ on $X$ is $m-$regular with respect to $\cL$ i.e.
		    $$ H^{i} (X, \cF \otimes \cL^{m-i}) = 0 \quad \text{for $i>0$ }.      $$
		    Then, $\cF \otimes \cL^{m}$ is generated by its global sections.
		\end{thm}
	\medskip
	
	\begin{proof}[Proof of Theorem~\ref{contrithm}:]
	
	Contribution by divisors at $x$ depends only on the local ring at $x$ (See Definition~\ref{defncontri}). So we may replace $X$ by the affine local scheme $\Spec(\cO_{X,x})$. Further, since by assumption $ E \subset E^{c}$ and $E$ is centered at $x$, $E_{x} ^{c}$ contains $E_{x}$ (henceforth denoted by $E$), so $E^{c}_{x}$ is non-empty and $c$ is a candidate jumping number (defined in Section~\ref{candidate}).
	
	\medskip
	
	\paragraph{\textit{Step 1:}} Let $q > 0$ be a candidate jumping number and $G$ be a prime divisor on $Y$ with $G \subset E^{q}$. Then we claim that $q$ is a jumping number contributed by $G$ if and only if $ H^{0}(G, \cO_{G}(K_{G} - \lfloor qF \rfloor|_{G})) \neq 0 $. To see this, consider the following exact sequence on $Y$:
	$$ \begin{tikzcd}
		0	 \arrow[r] & \cO_{Y}(-G)  	\arrow[r] & \cO_{Y} 	\arrow[r] & \cO_{G} 	\arrow[r] & 0	
	\end{tikzcd}.  $$
	Tensoring with $\cO_{Y}(K_{Y|X} - \lfloor qF \rfloor + G)$,  we get
	$$	0	 \to \cO_{Y}(K_{Y|X} - \lfloor qF \rfloor)  	\to \cO_{Y} (K_{Y|X} - \lfloor qF \rfloor + G)
	\to \cO_{G}((K_{Y|X} - \lfloor qF \rfloor + G)|_{G}) 	\to 0 .$$
	
	Applying $\mu_{*}$, using the Local Vanishing Theorem (Theorem~\ref{localvanishing}) and the fact that $X$ is affine by assumption, we see that
	\begin{equation} \label{4.1}
	    \mu _{*} (\cO_{Y}(K_{Y|X} - \lfloor qF \rfloor + G))/\mu _{*} (\cO_{Y}(K_{Y|X} - \lfloor qF \rfloor)) \isom H^{0}(G, \cO_{G}(K_{Y|X} - \lfloor qF \rfloor + G)|_{G}).
	\end{equation}	   
	
	Further, since $G$ is a smooth divisor, we have
	\begin{equation} \label{4.2}
	     	\cO_{G}(K_{Y|X} - \lfloor qF \rfloor + G)|_{G} \isom \cO_{G}(K_{G} - \mu^{*} K_{X}|_{G} -\lfloor qF \rfloor|_{G}),
	\end{equation}	
	where we have used the adjunction formula for $G \subset Y$. Using equations \ref{4.1} and \ref{4.2}, we get:
	$q$ is a jumping number at $x$ contributed by $G$ (Definition~\ref{defncontri}) if and only if
	\begin{equation}\label{contricrit}
		H^{0}(G, \cO_{G}(K_{G} - \lfloor qF \rfloor|_{G})) \neq 0.
	\end{equation}
	Here we are using the following observation: since $G$ maps to $x$  (See Lemma~\ref{exceplem}), the canonical bundle $\omega_{X}$ pulls back to the trivial bundle on $G$. Note also that since $G$ is a smooth projective variety over $L$ (the residue field of $x$), $K_{G}$ denotes the canonical divisor of $G$ over $L$. This is justified by the following argument:
	
	Let $\varphi$ denote the structure map $G \to \Spec(L)$. We have the following exact sequence coming from the first exact sequence for differentials (\cite[Chapter II, 8.11]{MR0463157}):
	$$ \begin{tikzcd}
		0	 \arrow[r] & \varphi^{*}\Omega_{K|_{k}}  	\arrow[r] & \Omega_{G|_{k}} 	\arrow[r] & \Omega_{G|_{L}} 	\arrow[r] & 0	
	\end{tikzcd} . $$
	This sequence is exact on the left because  $\varphi^{*}\Omega_{L|_{k}}$ is a free sheaf on $G$ of rank equal to the transcendence degree of $L$ over $k$ ($= d-h$) and the sequence is exact at the generic point of $G$. Since $G$ is smooth over $L$, the other two sheaves are locally free of ranks $d-1$ and $h-1$. Taking top exterior powers and using the freeness of the first sheaf, we get the required isomorphism:
	$$ \omega_{G|_{k}} \isom \omega_{G|_{L}}  .   $$

	 This completes the \emph{Step 1} of the proof, where we have proved that the statement that $q+n$ is a jumping number contributed by $G$ for a natural number $n$, is equivalent to the statement that the invertible sheaf
	\begin{equation} \label{contriequation}
		\text{  $\cO_{G} (K_{G} - \lfloor qF \rfloor|_{G}) \otimes \cO_{G}(-nF|_{G})$ has a non-zero global section.}	
	\end{equation}
	
		\medskip
		
	\paragraph{\textit{Step 2:}} Now, we use the hypothesis that the divisor $E \subset E^{c}$ is a Rees valuation centered at $x$ to conclude that the sheaf $\cO_{E} (K_{E} - \lfloor cF \rfloor|_{E}) \otimes \cO_{E}(-(h-1)F|_{E}) $ has a non-zero global section.
	
	To do this, like in the proof of Lemma~\ref{thm41}, we consider the normalized blow-up $\tilde{X}$ of the ideal $\fa$ and by the universal property of blowing up and normalization, the map $\mu$ factors through $\tilde{X}$. The Rees valuation $E$ on $Y$ is the strict transform of a prime divisor $\tilde{E}$ on $\tilde{X}$ and $E$ maps birationally onto $\tilde{E}$ (since $\tilde{X}$ is normal and $\tilde{E}$ is a divisor). Note that $\tilde{E}$ is also a projective variety over $L$. This is summarized in the picture below:
	
	$$\begin{tikzcd}
		E	\arrow[d, hook] \arrow[r, "f|_{E}"] & \tilde{E} \arrow[d, hook] \arrow[r] &  \Spec(L)  \arrow[d, hook] \\	
		Y	 \arrow[r, "f"] & \tilde{X}  \arrow[r] & X
	\end{tikzcd}$$
	
	Let $\iL = \cO_{Y}(-F) = \fa \cdot \cO_{Y}$ and $\tilde{\iL} = \fa \cdot \cO_{\tilde{X}}$. Then, $\tilde{\iL}$ is very-ample on $\tilde{X}$ and $\iL = f^{*}(\tilde{\iL})$. 
	So, using the projection formula along $f|_{E}$, we have
	$$ H^{0} (E, \cO_{E} (K_{E} - \lfloor cF \rfloor|_{E} ) \otimes \iL^{n})   \isom H^{0}( \tilde{E},  f|_{E *}(\cO_{E}(K_{E} - \lfloor cF \rfloor|_{E})) \otimes \tilde{\iL} ^{n}	).$$
	
	So, it is enough to show that $ H^{0}( \tilde{E},  f|_{E *}(\cO_{E}(K_{E} - \lfloor c F \rfloor|_{E})) \otimes \tilde{\iL} ^{h-1}	) $ is non-zero. We do this by using Mumford's theorem (Theorem~\ref{Mumford}). And to check the required cohomology vanishing conditions, we use the Kawamata-Viehweg vanishing theorem (Theorem~\ref{KV vanishing}).
	
	Note that $\iL$ is a globally generated line bundle on $Y$. Hence, by Bertini's theorem (\cite[Chapter III, Corollary 10.9]{MR0463157}), we may choose a smooth divisor (possibly disconnected) $D$ linearly equivalent to $-F$. Further, we may also assume that $D$ intersects $E$ transversally (in particular, no component of $D$ is $E$). Having chosen such a $D$, we note that $- \lfloor cF \rfloor = \lceil -cF \rceil \sim_{\QQ} cD + \sum a_{i} E_{i}$ as $\QQ-$divisors with $0 \leq a_{i} < 1$ and where $E_{i}$ are the irreducible components of $F$ in some order. Let $\varDelta = \sum a_{i} E_{i}$, then since $E \subset E^{c}$, the coefficient of $E$ in $cF$ is already an integer, hence the coefficient of $E$ in $\varDelta$ is $0$. So, we have $- \lfloor cF \rfloor|_{E} \sim_{\QQ}   cD|_{E} + \varDelta|_{E} $. Since the support of $\varDelta$ is the union of divisors in the support of $F$ and does not contain $E$, it follows that $\varDelta|_{E} $ has simple normal crossings support (since $F$ was snc). Further, all the coefficients of $\varDelta|_{E}$ are in the interval $[0,1)$. Next, $\iL|_{E}$ is the pullback of $ \tilde{\iL}|_{\tilde{E}}$ along $f|_{E}$; since $f|_{E}: E \to \tilde{E}$ is a projective birational map, and $\tilde{\iL}$ is very ample on $\tilde{E}$, $\iL|_{E}$ is big and nef on $E$. So $cD|_{E}$ is a big and nef $\QQ$-divisor. Since $E$ is smooth and projective over $L$ and all the relevant hypotheses remain true after base changing to an algebraic closure of $L$, we can apply the Kawamata-Viehweg vanishing theorem (Theorem~\ref{KV vanishing}) on $E$ with $N = -\lfloor cF \rfloor|_{E} \equiv_{num} cD|_{E} + \varDelta|_{E}$ to conclude that $$	H^{i} (E, \cO_{E}(K_{E} - \lfloor cF \rfloor|_{E})) = 0 \quad	\text{for all $i > 0$.}	$$
	By the same argument, for all natural numbers $n \in \NN_{\geq 0}$, we have
	\begin{equation} \label{KV}
		H^{i} (E, \cO_{E}(K_{E} - \lfloor (c+n)F \rfloor|_{E})) = 0 	 \quad	\text{for all $i > 0$ and all $n \geq 0$.}	
	\end{equation}
	Now, \cite[Lemma 4.3.10]{MR2095471} implies that $ R^{j} f|_{E *}  \cO_{E}(K_{E} - \lfloor (c+n)F \rfloor|_{E})) = 0 $ for all $j > 0$ and all $n \geq 0$. By the Leray spectral sequence, we then have isomorphisms
	\begin{equation} \label{leray}	
		H^{i} (E, \cO_{E}(K_{E} - \lfloor (c+n)F \rfloor|_{E})) \isom H^{i}( \tilde{E},  f|_{E *}\cO_{E}(K_{E} - \lfloor (c+n)F \rfloor|_{E}))  
	\end{equation}
	for all $i \geq 0$ and all $n \geq 0$. So, if $\iF$ denotes the (coherent, since $f|_{E}$ is proper) sheaf $f|_{E*} \cO_{E}(K_{E} - \lfloor cF \rfloor|_{E})$ on $\tilde{E}$, then putting together (\ref{KV}) and (\ref{leray}), we get  
	\begin{equation} \label{CMregularity}
		H^{i}( \tilde{E},  \iF \otimes \tilde{\iL} ^{n} |_{\tilde{E}}) = 0 \quad \text{for all $i>0$ and $n \geq 0$.}
	\end{equation}
	Finally, (\ref{CMregularity}) implies that $\iF \otimes \tilde{\iL}^{h-1} |_{\tilde{E}}$ is $0-$regular with respect to the very ample line bundle $\tilde{\iL}|_{\tilde{E}}$ i.e.,
	
	$$	H^{i}( \tilde{E},  \iF \otimes \tilde{\iL} ^{h-1} |_{\tilde{E}} \otimes \tilde{\iL} ^{-i} |_{\tilde{E}}) = 0 \quad \text{for all $i>0$}	.$$
	
	This is because since the dimension of $\tilde{E}$ is $h-1$, we only need to check the vanishing of cohomology groups till $i=h-1$. But if $i \leq h-1$, then (\ref{CMregularity}) gives the required vanishing. So, by Mumford's Theorem (Theorem~\ref{Mumford}), we get that $\iF \otimes \tilde{\iL}^{h-1} |_{\tilde{E}}$ is globally generated, in particular, 
	$$	H^{0}( \tilde{E},  \iF \otimes \tilde{\iL} ^{h-1}|_{E} ) \neq 0	.$$
	So, using (\ref{contriequation}), we are done.
	This completes the proof of Theorem~\ref{contrithm} and hence of Theorem~\ref{mainthm41}.   
	\end{proof}
	
\medskip

	\begin{eg} \label{example} 	The Rees coefficient  $\rho_{c}$ (Definition~\ref{rhocdef}) of a jumping number $c$  is not always positive. Indeed, consider the ideal $\fa = (x^5 + y^3, y^4)$ in the polynomial ring $k[x,y]$ defining a point scheme supported at the origin in $X = \AA^{2}$. This ideal has Rees coefficient zero for all of its jumping numbers less than one. 
	
	To check this, observe that 	since dimension of $X$ is two, $\rho_{c}$ can be computed on any log resolution  $\mu: Y \to X$ of $\fa$, with $\fa \cdot \cO_{Y} = \cO_{Y}(-F)$ as $$ \rho_{c} = -F \cdot E^{c}, $$
	 where $E^{c}$ is the jumping divisor of $c$  as defined in Definition~\ref{jumpingdiv}. This follows by specializing Theorem \ref{rhoc} to the surface case, although it is also proved in \cite[Theorem~4.1]{MR3558221}.
	
	A log resolution $Y$ of $\fa$ can be computed by hand and requires $9$ successive blow-ups. Let $E_{1}, \dots, E_{9}$ denote the exceptional divisors obtained in the order they are labelled. Then, the intersection matrix for the resolution is:
		
		$$	\begin{pmatrix}
			-3 & 0 & 1 & 0 & 0 & 0 & 0 & 0 & 0 \\
			0  &-3 & 0 & 1 & 0 & 0 & 0 & 0 & 0 \\
			1  & 0 &-2 & 1 & 0 & 0 & 0 & 0 & 0 \\
			0  & 1 & 1 &-2 & 1 & 0 & 0 & 0 & 0 \\
			0  & 0 & 0 & 1 &-2 & 1 & 0 & 0 & 0 \\
			0  & 0 & 0 & 0 & 1 &-2 & 1 & 0 & 0 \\
			0  & 0 & 0 & 0 & 0 & 1 &-2 & 1 & 0 \\
			0  & 0 & 0 & 0 & 0 & 0 & 1 &-2 & 1 \\
			0  & 0 & 0 & 0 & 0 & 0 & 0 & 1 &-1 \\
		\end{pmatrix} $$
		The relative canonical divisor is $$K_{Y|X} = E_{1} + 2 E_{2} + 4 E_{3} + 7 E_{4} + 8 E_{5} +  9 E_{6} + 10 E_{7} + 11 E_{8} + 12 E_{9}.$$ If $\fa \cdot \cO_{Y} =  \cO_{Y}(-F) $, then $$F = 3 E_{1} + 5 E_{2} + 9 E_{3} + 15 E_{4} + 16 E_{5} + 17 E_{6} + 18 E_{7} + 19 E_{8} + 20 E_{9}.$$ Using this information, we  compute that the jumping numbers of $\fa$ less than one are
		$\frac{8}{15},  \frac{11}{15}, \frac{13}{15}$ and $\frac{14}{15}$--- for example,  by hand using the formula from \cite{MR3558221} or by using the 
	\emph{Macaulay2} package \cite{MultiplierIdealsDim2Source}. We can then compute that the jumping divisors for each of these jumping numbers turn out to be the same, namely 
	$E_{4}$. Therefore, using the intersection matrix above, we can compute that $-F \cdot E_c $ is zero, and conclude that the Rees coefficient for each of the jumping numbers 	$\frac{8}{15},  \frac{11}{15}, \frac{13}{15}$ and $\frac{14}{15}$ is zero in this case. \qed
	\end{eg}

		Theorem~\ref{mainthm41} and Example~\ref{example} suggest that the classes of jumping numbers in $\QQ / \ZZ$ naturally come in various types depending on the type of growth of the multiplicities of its translates. The type of jumping numbers for which the multiplicity grows fastest (i.e., for which the Rees coefficient is positive) is described by Theorem~\ref{mainthm41}. 
	The next  corollary states that the class of {\it integer} jumping numbers always has maximal growth of its multiplicities--that is $\rho_1>0$:

	\begin{Cor}	\label{notmaincor} Let $Z$ be a  closed subscheme of the smooth variety $X$, with  irreducible component $Z_{1}$  of codimension $h$.  Then we have,
	\begin{enumerate}
	    \item The Rees coefficient $\rho_{1} $ of the real number $1$ along $Z_{1}$ is always positive;
	    \item There are at most  $(h-1)! \, \rho_{1}$\ Rees valuations of $Z$ centered at $Z_1$.
	    \item The codimension $h$ is always a jumping number of $\fa$ contributed by each of the Rees valuations of $\fa$ centered at the generic point of $Z_{1}$.
	\end{enumerate}
	\end{Cor}
	
	Part (3) of Corollary \ref{notmaincor} recovers the  fact that, for a regular local ring $(A, \fm)$ of dimension $h$ essentially of finite type over $k$,  $h$ is always a jumping number of each $\fm-$primary ideal $\fa$.  
	
	\begin{proof}
		Consider a log resolution $\mu: Y \to X$ of $Z$ with $\fa \cdot \cO_{Y} = \cO_{Y}(-F)$.
		
		\begin{enumerate}
		    \item Since the jumping divisor $E^{1}$ (Definition~\ref{jumpingdiv}) is the same as $F_{\text{red}}$, it contains all the exceptional divisors corresponding to the Rees valuations. So, $E^{1}_{x}$ contains all the Rees valuations of $\fa$ centered at $x$. So, $\rho_{1}$ must be positive by Theorem~\ref{reesval}.
		    
		    \item By Theorem~\ref{rhoc}, we have
		    	$$ 	\rho_{c} =  (-1)^{h-1} \frac{	( F|_{E^{c} _{x}} \cdot \ldots \cdot F|_{E^{c}_{x}})}{(h-1)!} = \sum_{E \subset E^{c}} (-1)^{h-1} \frac{(F|_{E_{x}} \cdot \ldots \cdot F|_{E_{x}})}{(h-1)!}.	$$
	 where the second equality holds due to Proposition~\ref{interpn} (\ref{intersection4}). Since the intersection numbers $( -F|_{E_{x}} \cdot \ldots \cdot -F|_{E_{x}})$ are non-negative integers (by Lemma~\ref{thm41}) and are positive exactly for the Rees valuations at $x$, each such valuation adds at least $1/(h-1)!$ to $\rho_{1}$. Hence the claim.
		    \item The last statement follows immediately from Theorem~\ref{contrithm} since $E^{1}$ contains each of the Rees valuations at $x$.
		\end{enumerate}  
	\end{proof}

 For a closed subscheme $Z$ of $X$ with an irreducible component $Z_{1}$, Theorem~\ref{mainthm41} and the polynomial nature of the multiplicities $ m(c+n)$ guarantee more jumping numbers of $Z$ along $Z_{1}$ in the interval $(h-1, h]$ (where $h$ is the dimension of the local ring at $Z_1$). The following observations generalize similar ones made in \cite{MR3558221} when $\dim X=2$.
	
	\begin{Cor} \label{maincor} Fix an irreducible component $Z_{1}$ of a closed subscheme $Z$ of the smooth variety $X$.
		Let $\nu_{1}, \dots, \nu_{r}$ be all the Rees valuations of $Z$ (Definition~\ref{reesvaldfn}) centered at the generic point $x$ of $Z_{1}$. Set $a_{i}$ to be the order of vanishing of $Z$ along the valuation $\nu_{i}$, that is, set
		$$ a_{i} = \min \{ \nu_{i}(f) | f \in \fa \}, $$
		where $\fa$ is the ideal of $Z$. 
		Then we have,
		\begin{enumerate}
			\item Each rational number $\frac{\ell}{a_{i}}$ (where  $\ell \in \mathbb Z$) in the interval $ (h-1, h]$ is a jumping number at $x$ contributed by $\nu_{i}$ (in the sense of Definition~\ref{defncontri}).
			\item If $c$ is a jumping number at $x$ in the interval $(h-1, h]$ such that $c$ is not an integer translate of a smaller jumping number at $x$, then $c$ is a rational number of the form in part (1).
			\item More generally, if $c < h$ is a jumping number such that $c$ is not an integer translate of a smaller jumping number at $x$, then $m(c+n)$ is polynomial (in $n$) of degree at least  $\lfloor c \rfloor$.
		\end{enumerate}
	\end{Cor}
	
	\begin{proof} Clearly (3) implies (2). So, we just prove (1) and (3):
		\begin{enumerate}
			\item Let $\mu: Y \to X$ be a log resolution of $Z$ and let $E_{1}, \dots, E_{r}$ denote the exceptional divisors on $Y$ corresponding to $\nu_{1}, \dots, \nu_{r}$ respectively. We see that for any number of the form $\frac{\ell}{a_{i}} $, $E_{i}$ is contained in the corresponding jumping divisor (Definition~\ref{jumpingdiv}) and hence the Rees coefficient $\rho_{\frac{\ell}{a_{i}}} $ is positive by Theorem~\ref{reesval}. The claim now follows immediately from Theorem~\ref{contrithm}.
			
			\item [(3)] If $c$ is a jumping number such that $c-n$ is not a jumping number for any $n$, $m(\{c\} + n)$ is a polynomial (Theorem~\ref{thm3.1}) which is zero for $0 \leq n \leq  \lfloor c \rfloor -1$. Here $\{ c \}$ $(=  c - \lfloor c \rfloor)$ denotes the fractional part of the real number $c$. This means the degree must be at least $\lfloor c \rfloor$.
		\end{enumerate}
	\end{proof}
	
	\begin{rem} 
	Corollary~\ref{maincor} is interesting because it ensures that 
many jumping numbers in the interval $(h-1, h]$ can be computed easily from the normalized blow-up of a closed subscheme $Z$ (without computing a full log resolution of $Z$). Those that cannot are integer translates of a smaller jumping number.

In this context, we recall the result of Budur from \cite{MR2015069} mentioned in the introduction which implies that when $Z$ is a point scheme, the jumping numbers (along with their multiplicity) of $Z$ in the interval $(0,1)$ can be interpreted as coming from the cohomology of the Milnor fiber of a general element of $\fa$ (the ideal of $Z$). Putting this result together with Corollary~\ref{maincor} gives us an interpretation of essentially all jumping numbers of a point scheme in a smooth surface. In higher dimension, this only gives us an understanding of jumping numbers in the intervals $(0,1)$ and $(d-1, d]$. We raise the following two questions towards understanding all the other jumping numbers.
		
	\end{rem}
	
	\begin{ques}
		For each integer $j$ in the interval $ [0 , h-2]$, how do we characterise the classes of jumping numbers $[c]$ such that $m(c+n)$ grows like $n^{j}$? In particular, can we characterise such classes of jumping numbers in terms of contribution by divisors? 
	\end{ques}
	
	\begin{ques}
		Given a class of jumping numbers $[c] \in \QQ/\ZZ$ contributed by a Rees valuation, how do we find the smallest jumping number in $[c]$? More generally, how do we find the smallest jumping number in any given class of jumping numbers in $\QQ/\ZZ$? Equivalently, how do we characterize the jumping numbers $c \leq h-1$ of $\fa$ such that $c$ is not a translate of a smaller jumping number of $\fa$.
	\end{ques}
	
\subsubsection{\textbf{On the valuations contributing jumping numbers.}}	
	As another consequence of Theorem~\ref{contrithm}, we prove the following theorem:
		\begin{thm} \label{valuationthm}
	Let $X$ be a smooth variety over $k$. Let $\nu$ be a divisorial valuation over $X$ i.e., a divisorial valuation centered at a (not necessarily closed) point $x$ in $X$. Then there is an effective integral divisor $D$ on $X$ and $c > 0$, a jumping number of $D$ at $x$ such that $c$ is \emph{contributed by} $\nu$. 
		\end{thm}

	\begin{proof}
	 Let $(A, \fm)$ denote the local ring at $x$ and $h$ be the Krull dimension of $A$. First, suppose $h=1$ i.e., $\nu$ comes from a divisor $E$ on $X$. Then taking $D=E$ and $c=1$ works. So, we assume $h >1$.
	 
	 Next, we claim that there is an ideal $\fa$ in $A$ such that $\fa$ is $\fm-$primary and $\nu$ is a Rees valuation of $\fa$ (Defined in Section~\ref{reesvaldfn}). This follows immediately from  \cite[Proposition 10.4.4]{MR2266432}. So we may choose an ideal $\fa \subset \cO_{X}$ such that $x$ corresponds to a minimal component of $\fa$ and the stalk of $\fa$ at $x$ has $\nu$ as a Rees valuation. Then by Corollary~\ref{notmaincor}, we know that $h$ is a jumping number of $\fa$ at $x$ contributed by $\nu$. Now, choose a log resolution $\mu: Y \to X$ of $\fa$ and let $\fa \cdot \cO_{Y} = \cO_{Y} (-F)$, where $F= \sum a_{i} E_{i}$ and for prime components $E_{i}$. We may assume $\nu$ corresponds to $E_{1}$ on $Y$. Now, the fact that $h$ is contributed by $\nu$ means (by definition) that:
	\begin{equation} \label{contributioneq}
	    \iI(\fa^{h})_{x} = \mu _{*} (\cO_{Y}(K_{Y|X} -  hF ))_{x} \subsetneqq \mu _{*} (\cO_{Y}(K_{Y|X} -  hF  + E_{1}))_{x} .
	\end{equation}
Now, if we choose $D$ to be a general member of the ideal $\fa ^{n}$ (i.e., a general $k-$linear combination of its generators) for any $n > h$, then a general enough $D$ will be reduced (since $h>1$) and further, it will satisfy
$\mu^{*} D = \tilde{D} + nF$ where $\tilde{D}$ is the strict transform of $D$. Now, we claim that $\frac{h}{n}$ is a jumping number of $D$ at $x$ contributed by $E_{1}$. To verify this, we need to check that
    
       $$ \iI(\tfrac{h}{n} D)_{x} = \mu _{*} (\cO_{Y}(K_{Y|X} - \lfloor \tfrac{h}{n} (nF + \tilde{D})  \rfloor))_{x} \subsetneqq \mu _{*} (\cO_{Y}(K_{Y|X} - \lfloor \tfrac{h}{n} (nF+ \tilde{D}) \rfloor + E_{1}))_{x} $$
        which holds if and only if
       $$    \mu _{*} (\cO_{Y}(K_{Y|X} -  hF ))_{x} \subsetneqq \mu _{*} (\cO_{Y}(K_{Y|X} -  hF  + E_{1}))_{x} $$
which holds because $\frac{h}{n}$ is less than one and $\tilde{D}$ is reduced. So, we are done by (\ref{contributioneq}).
\end{proof}

\begin{rem}
    Theorem~\ref{valuationthm} shows that the set of valuations that contribute some jumping number of some divisor in $X$ includes all divisorial valuations over $X$, in contrast to the valuations computing only the log canonical threshold of divisors, which are known to satisfy many special properties (See \cite{MR4206085}). This negatively answers a question raised by Joaqu\'{i}n Moraga, asking whether any valuation contributing a jumping number also computes a log canonical threshold. We thank him for asking this question and useful related conversations.
\end{rem}

	\subsection{Relation to Hilbert-Samuel Multiplicity} \label{Hilbert}
	
	In this section, we relate the Hilbert-Samuel multiplicity of the subscheme $Z$ at an irreducible component $Z_{1}$ to the Rees coefficients $\rho_{c}$ associated to the jumping numbers of $\fa$. We first recall the relevant definitions.
	
	Let $(A, \fm)$ be the local ring of $X$ at the generic point of $Z_{1}$ and $\fa$ be the ideal of $Z$ in $A$. Let $h$ denote the Krull dimension of $A$. The Hilbert-Samuel multiplicity of $(A, \fm)$ with respect to the $\fm-$primary ideal $\fa$, $e_{\fa} (A)$ is defined to be the number 
	$$ 	e_{\fa}(A) =  \lim \limits_{n \to \infty} \frac{h! \, \lambda(A/\fa^{n})}{n^{h}}	$$
	where $\lambda$ denotes the length as an $A-$module. The number $e_{\fa}(A)$ is always a positive integer.
	
	 Recall that for any real number $c>0$, the Rees coefficient $\rho_{c}$ of $Z$ along $Z_{1}$ was defined in Definition~\ref{rhocdef} such that $m(c+n) = \rho_{c} n^{h-1} + o(n^{h-2})$ as $n \to \infty$. Here $m(c)$ denotes the multiplicity of the number $c$ (Definition~\ref{multidef}).

	\begin{thm} \label{HSmult}
		The following equality holds:
		$$ \sum \limits _{c \in (0,1]} \rho_{c} = \frac{e_{\fa}(A)}{(h-1)!}	.	$$
	\end{thm}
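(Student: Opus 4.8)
The plan is to compute the leading term of $\dim_k A/\iI(\fa^{N})$ as $N\to\infty$ in two different ways and equate the coefficients of $N^d$.

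\emph{Step 1: a telescoping identity.} Recall that $c\mapsto\iI(\fa^c)$ is locally constant (constant on each half-open interval $[c_i,c_{i+1})$ between consecutive jumping numbers of $\fa$) and equals $A$ for $c$ near $0$. Listing the jumping numbers in $(0,N+1]$ as $c_1<\cdots<c_M$, the successive quotients of the chain $A\supseteq\iI(\fa^{c_1})\supseteq\cdots\supseteq\iI(\fa^{c_M})$ have $k$-dimensions $m(c_1),\dots,m(c_M)$, and $\iI(\fa^{c_M})=\iI(\fa^{N+1})$ since $c_M$ is the largest jumping number $\le N+1$; hence
\begin{equation*}
\dim_k A/\iI(\fa^{N+1})=\sum_{i=1}^{M}m(c_i).
\end{equation*}
The set of candidate jumping numbers of $\fa$ is $\{n/a_i\}\cup\{n/b_i\}$, which is stable under adding integers and contains every jumping number, so sorting the jumping numbers in $(0,N+1]$ by their representative in $(0,1]$ rewrites this, with $f_c(j)=m(c+j)$ as in Section~\ref{Polynomial}, as
\begin{equation*}
\dim_k A/\iI(\fa^{N+1})=\sum_{j=0}^{N}\;\sum_{c\in(0,1]}m(c+j)=\sum_{c\in(0,1]}\;\sum_{j=0}^{N}f_c(j),
\end{equation*}
the inner sums running over the finitely many candidate jumping numbers $c$ in $(0,1]$ (a term with $c+j$ not a jumping number contributes $0$). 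By Theorem~\ref{thm3.1} each $f_c$ is a polynomial in $j$ of degree $\le d-1$ with $j^{d-1}$-coefficient $\rho_c$, so $\sum_{j=0}^N f_c(j)$ is a polynomial in $N$ of degree $\le d$ with leading term $\tfrac{\rho_c}{d}N^d$, and summing over the finitely many $c$,
\begin{equation*}
\dim_k A/\iI(\fa^{N+1})=\frac{1}{d}\Bigl(\sum_{c\in(0,1]}\rho_c\Bigr)N^d+O(N^{d-1}).
\end{equation*}

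\emph{Step 2: the Hilbert--Samuel estimate.} I claim the left-hand side also equals $\tfrac{e_\fa(A)}{d!}N^d+o(N^d)$. On the one hand $K_{Y|X}$ is effective, so $\iI(\fa^N)=\mu_*\cO_Y(K_{Y|X}-NF)\supseteq\mu_*\cO_Y(-NF)\supseteq\fa^N$. On the other hand, Skoda's Theorem~\ref{skoda} gives $\iI(\fa^e)=\fa\cdot\iI(\fa^{e-1})$ for every rational $e>d$ (take $c=e-d>0$, $m=d$), and iterating from $e=N$ down to $e=d+2$ yields $\iI(\fa^N)=\fa^{\,N-d-1}\,\iI(\fa^{d+1})\subseteq\fa^{\,N-d-1}$ for all $N\ge d+1$. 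Thus $\fa^N\subseteq\iI(\fa^N)\subseteq\fa^{\,N-d-1}$, and since $\dim_k A/\fa^n=\tfrac{e_\fa(A)}{d!}n^d+o(n^d)$ by the very definition of the Hilbert--Samuel multiplicity, a squeeze gives $\dim_k A/\iI(\fa^{N+1})=\tfrac{e_\fa(A)}{d!}N^d+o(N^d)$. Comparing the $N^d$-coefficients from the two steps gives $\tfrac1d\sum_{c\in(0,1]}\rho_c=\tfrac{e_\fa(A)}{d!}$, i.e. $\sum_{c\in(0,1]}\rho_c=\tfrac{e_\fa(A)}{(d-1)!}$.

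\emph{Where the work is.} The telescoping is routine once one recalls that $c\mapsto\iI(\fa^c)$ is locally constant, and the Skoda squeeze is essentially the mechanism behind the periodicity of jumping numbers (Proposition~\ref{periodic}). The one point that demands care is the regrouping in Step~1: a candidate jumping number $c\in(0,1]$ need not itself be a jumping number even though $c+j$ is one for $j$ large (see Example~\ref{example}), so $f_c$ is not identically zero while $m(c)=0$; it is precisely Theorem~\ref{thm3.1} that makes the summed contributions well-behaved. I therefore expect this bookkeeping, rather than the Skoda estimate, to be the delicate part. As an alternative to Step~2 one can argue geometrically: since $\sum_{c\in(0,1]}E^c=\lfloor F\rfloor=F$, Theorem~\ref{rhoc} gives $\sum_{c\in(0,1]}\rho_c=\tfrac{(-1)^{d-1}}{(d-1)!}(F\cdot\ldots\cdot F)$, and identifying $(-1)^{d-1}(F\cdot\ldots\cdot F)$ with $e_\fa(A)$ is the classical expression of the Hilbert--Samuel multiplicity as a normalized self-intersection number of the exceptional divisor of the blow-up.
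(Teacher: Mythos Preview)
Your proof is correct and follows essentially the same route as the paper: the paper isolates your Step~2 as a separate lemma (Lemma~\ref{lemmult}, the Skoda squeeze showing $e_\fa(A)=\lim_{n\to\infty}\tfrac{d!\dim_k A/\iI(\fa^n)}{n^d}$) and then performs the same telescoping and regrouping by representatives in $(0,1]$ together with the polynomial growth from Theorem~\ref{thm3.1}, arriving at the same comparison of $N^d$-coefficients. Your closing remark, computing $\sum_{c\in(0,1]}\rho_c$ directly via Theorem~\ref{rhoc} and the identity $\sum_{c\in(0,1]}E^c=F$ and then invoking the intersection-theoretic formula $e_\fa(A)=(-1)^{d-1}(F^d)$, is a genuine alternative not used in the paper and gives a slightly cleaner derivation that bypasses the Skoda squeeze entirely.
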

	
	The main idea relating these two numbers is that the Hilbert-Samuel multiplicity can be computed using multiplier ideals, which is a consequence of Skoda's Theorem: 
	\begin{lem}\label{mult} \label{lemmult}
		$$ e_{\fa}(A) = \lim \limits_{n \to \infty} \frac{h! \, \lambda(A/\iI(\fa^{n}))}{n^{h}} $$
		
	\end{lem}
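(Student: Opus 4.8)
The plan is to trap the multiplier ideal $\iI(\fa^{n})$ between two ordinary powers of $\fa$ whose colengths have the same leading asymptotics in $n$, and then to conclude by a squeeze argument. Concretely, I would first establish the chain of inclusions
$$ \fa^{n} \subseteq \iI(\fa^{n}) \subseteq \fa^{\,n-d} \qquad \text{for all integers } n \geq d, $$
and then divide the resulting inequalities on $k$-dimensions by $n^{d}/d!$ and let $n \to \infty$.

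For the left inclusion, recall that $\iI(\fa^{n}) \supseteq \fa^{n}$ holds for every $n$ (noted just after Definition \ref{multidef}). For the right inclusion, I would iterate Skoda's Theorem \ref{skoda}: for each integer $k \geq d+1$ one has $\iI(\fa^{k}) = \fa \cdot \iI(\fa^{k-1})$ (apply Theorem \ref{skoda} with $m = d$ and $c = k-d \geq 1$), so running this down from $n$ to $d$ gives $\iI(\fa^{n}) = \fa^{\,n-d}\, \iI(\fa^{d})$; since $\iI(\fa^{d}) \subseteq A$, this ideal is contained in $\fa^{\,n-d}$. Because $\fa$ is $\fm$-primary, all the quotients occurring here have finite $k$-dimension, and the sandwich yields $\dim_{k}(A/\fa^{\,n-d}) \leq \dim_{k}(A/\iI(\fa^{n})) \leq \dim_{k}(A/\fa^{n})$ for every $n \geq d$.

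Finally, multiplying through by $d!/n^{d}$, the upper bound tends to $e_{\fa}(A)$ by the very definition of the Hilbert--Samuel multiplicity, while the lower bound equals
$$ \frac{d!\,\dim_{k}(A/\fa^{\,n-d})}{(n-d)^{d}} \cdot \frac{(n-d)^{d}}{n^{d}}, $$
which tends to $e_{\fa}(A) \cdot 1$ since the first factor has limit $e_{\fa}(A)$ and the second has limit $1$. The squeeze theorem then delivers the claimed identity. There is no serious obstacle here beyond invoking Skoda's Theorem, which is precisely what converts the asymptotics of the multiplier ideals into those of the ordinary powers of $\fa$; the only mild point to keep straight is the index-shift bookkeeping in the last display, namely that replacing $n$ by $n-d$ does not affect the $n^{d}$-asymptotics.
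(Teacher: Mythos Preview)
Your proof is correct and follows essentially the same approach as the paper: sandwich $\iI(\fa^{n})$ between two ordinary powers of $\fa$ via Skoda's Theorem, then pass to the limit. The only cosmetic difference is that the paper uses the slightly sharper inclusion $\iI(\fa^{n}) \subseteq \fa^{\,n-d+1}$ (one more iteration of Skoda), whereas you stop at $\fa^{\,n-d}$; either works for the squeeze, and your version has the minor virtue of respecting the hypothesis $c>0$ in the paper's stated form of Skoda's Theorem.
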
 
	
	\begin{proof}
		If $d$ is the dimension of the ambient variety $X$, for $ n \geq d$, we have
		$$ \fa^{n} \subset \iI(\fa^{n})  \subset \fa^{n-d+1}$$
		where the second containment holds because of Skoda's Theorem (Theorem~\ref{skoda}).
		This gives us
		$$ \lambda(A/\fa^{n-d+1}) \leq \lambda(A/(\iI(\fa^{n}))) \leq \lambda(A/\fa^{n}) .	$$
		When we divide by $n^{h}$ and take limit as $n \to \infty$, since both the first and third terms approach $e_{\fa}(A)/h!$, so does the middle term. 	\end{proof}
	
	\begin{proof}[Proof of Theorem~\ref{HSmult}]
	We first note that $\rho_{c}$ can be non-zero only for the candidate jumping numbers (Defined in Section~\ref{candidate}). Hence, the sum is finite. Now to prove the proposition, we use Lemma~\ref{lemmult} to compute $e_{\fa}(A)$ as follows:
	$$ \frac{e_{\fa}(A)}{h!} = 	\lim \limits_{n \to \infty} \frac{\lambda(A/\iI(\fa^{n}))}{n^{h}} = \lim \limits_{n \to \infty} \frac{\sum \limits_{c \leq n} m(c)}{n^{h}} =  \lim \limits_{n \to \infty} \frac{\sum \limits_{c \in (0,1]} \sum \limits _{j=0} ^{n-1} m(c+j)}{n^{h}}.	$$
	Since $m(c+j) = \rho_{c} j^{h-1} + o(j^{h-1})$ as $j \to \infty$, by using the fact that $\sum \limits_{j=0} ^{n} j^i = \frac{n^{i+1}}{i+1} + o({n^{i}})$ as $n \to \infty$, we have $ \sum \limits_{j=0} ^{n-1} m(c +j) = \frac{\rho_{c}}{h} n^{h} + o(n^{h-1}) $ as $n \to \infty$. Then, we have
	$$ 	 \frac{e_{\fa}(A)}{h!} = \lim \limits_{n \to \infty} \frac{\sum \limits_{c \in (0,1]} \frac{\rho_{c}}{h}n^{h} + o(n^{h-1})}{n^{h}} = \frac{\sum \limits _{c \in (0,1]} \rho_{c}}{h} 	$$which concludes the proof.
	\end{proof}
	
	\section{Special Case: Monomial Ideals}
	
	In this section, we derive an explicit formula for the multiplicities of jumping numbers and Rees coefficients (Theorem~\ref{monomial}) of an arbitrary cofinite monomial ideal (See Definitions~\ref{multidef} and \ref{rhocdef}). In particular, we see that the Rees coefficient is positive for the class of any jumping number in this case (Corollary~\ref{anotcor}).
	
	We introduce the relevant notation. Let $X = \AA^{d}$, $R = k[x_{1}, \dots , x_{d}]$ and $\fa$ be an $\fm-$primary monomial ideal where $\fm = (x_{1}, \dots, x_{d})$. 
	Fix the lattice $M = \NN^{d}$ inside $M_{\RR} = \RR^{d}$. For any $v = (v_{1}, \dots, v_{d}) \in M$, we write $x^{v} = x_{1}^{v_{1}} \cdots x_{d} ^{v_{d}}$ for the corresponding monomial. For any ideal generated by monomials $\fa$ in $R$, the \emph{Newton Polyhedron} $P(\fa) \subset \RR^{d}$ of $\fa$ is the convex hull of all vectors $v \in M$ such that $x^{v}$ belongs to $\fa$. Note that  $P(\fa)$ is an unbounded polyhedral (i.e., bounded by polygonal faces) region in the first orthant of $\RR^{d}$. For any positive real number $c$, let $P(c \cdot \fa)$ denote the polyhedron obtained by scaling the vectors in $P(\fa)$ by a factor of $c$. A theorem of Howald (\cite{MR1828466}) provides a formula for the multiplier ideals of $\fa$:
	
	\begin{thm}{(\cite[ Theorem 9.3.27]{MR2095472})}
	    Fix a monomial ideal $\fa$ in $R$ and a positive real number $c$. Then the multiplier ideal $\iI(\fa^{c})$ is the monomial ideal generated by all monomials $x^{v}$ such that 
		$$ 	v + \mathbf{1} \in \text{Int}( P(c \cdot \fa)),	$$
		where $\mathbf{1}$ is the vector $(1, \dots, 1)$ and  $\text{Int}(P(c \cdot \fa))$ denotes the interior of the scaled Newton polyhedron $ P(c \cdot \fa)$.
	\end{thm}

	\medskip We can now state our formula for the multiplicity and Rees coefficient of a jumping number.
	\begin{notation} 
	We will use the following notation throughout this section.
	\begin{itemize}
	    \item For any set $S \subset \RR^{d}$, $\# (S)$ denotes the number of lattice points in $S$.
	    \item If $P$ is a face of a $d-$dimensional polyhedron in $\RR^{d}$, then $\vol(P)$ denotes the $d-1$-volume of $P$.
	\end{itemize}
	 
	\end{notation}
		
	\begin{thm} \label{monomial} Fix an $\fm-$primary monomial ideal $\fa$ and a positive real number $c$.
		Then the multiplicity of $c$ is given by:
		\begin{equation} \label{monomialmult}
		m(c) = \# \Big\{ \bigcup _{i} cP_{i} ^{\circ} \cap M \Big \},\end{equation} 
		where 
		the $P_i$ are all the bounded faces of the Newton polyhedron $P(\fa)$ and $$ cP_{i}^{\circ}  :=  cP_{i} -\Big( \bigcup_{1 \leq j \leq d}  j^{th} \text{ coordinate hyperplane} \Big) = cP_{i} \cap (\mathbf{1} + M) .$$
		Moreover, the Rees coefficient $\rho_{c}$ is given by
		$$ \rho_{c} = \sum \limits _{c\cH_{i}  \cap M \neq \emptyset} \vol(P_{i}),$$
		where $ \mathcal H_i$ is the unique hyperplane containing the face $P_i$. 
	\end{thm}

	To prove Theorem~\ref{monomial}, we need the following Lemma:
	
	\begin{lem} \label{secobs} For each jumping number $c$ of the  monomial ideal $\fa$, the $k$-vector space $\iI(\fa^{c - \epsilon})/\iI(\fa^{c})$ is isomorphic to the vector space generated by the monomials in $\partial P(c \cdot \fa) \cap (\mathbf{1} + M)$, where $\partial  P(c \cdot \fa) $ is the boundary of the dilated (by $c$) Newton polyhedron $P(c \cdot \fa)$. \end{lem}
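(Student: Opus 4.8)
The plan is to use Howald's theorem to convert the statement into a counting problem for lattice points of $\mathbf 1+M$, and then to identify precisely which monomials are ``lost'' between $\iI(\fa^{c-\epsilon})$ and $\iI(\fa^{c})$ by a short convexity argument together with an integrality argument that forces $\epsilon$ to be small. First I would record the following reformulation of Howald's theorem: for $w\in M$ and $t>0$, one has $x^{w}\in\iI(\fa^{t})$ \emph{if and only if} $w+\mathbf 1\in\mathrm{Int}(P(t\cdot\fa))$. The ``if'' part is immediate (take $v=w$ as a Howald generator); for ``only if'', if $x^{w}$ is a multiple of a generator $x^{v}$ with $v+\mathbf 1\in\mathrm{Int}(P(t\cdot\fa))$, then $w+\mathbf 1=(v+\mathbf 1)+(w-v)$ with $w-v\in\RR_{\geq 0}^{d}$, and since $P(t\cdot\fa)$ is stable under translation by the first orthant, a small ball about $v+\mathbf 1$ inside $P(t\cdot\fa)$ translates to a small ball about $w+\mathbf 1$ inside $P(t\cdot\fa)$. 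It follows that $\iI(\fa^{c})$ and $\iI(\fa^{c-\epsilon})$ are monomial ideals with $\iI(\fa^{c})\subseteq\iI(\fa^{c-\epsilon})$, so the quotient $\iI(\fa^{c-\epsilon})/\iI(\fa^{c})$ has as $k$-basis the classes of the monomials $x^{w}$ with
\[ w+\mathbf 1\in\mathrm{Int}\big(P((c-\epsilon)\cdot\fa)\big)\setminus\mathrm{Int}\big(P(c\cdot\fa)\big). \]
Hence it suffices to prove that, for $\epsilon>0$ small enough, a lattice point $u\in\mathbf 1+M$ lies in $\mathrm{Int}(P((c-\epsilon)\cdot\fa))\setminus\mathrm{Int}(P(c\cdot\fa))$ if and only if $u\in\partial P(c\cdot\fa)$; the map $x^{w}\mapsto x^{w+\mathbf 1}$ will then furnish the claimed isomorphism onto the span of the monomials in $\partial P(c\cdot\fa)\cap(\mathbf 1+M)$.

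For the direction ``$u\in\partial P(c\cdot\fa)\Rightarrow u$ lies in the difference of interiors'' (which holds for every $\epsilon\in(0,c)$), I would use that $P((c-\epsilon)\cdot\fa)=\tfrac{c-\epsilon}{c}\,P(c\cdot\fa)$, so $u\in\mathrm{Int}(P((c-\epsilon)\cdot\fa))$ iff $\tfrac{c}{c-\epsilon}u\in\mathrm{Int}(P(c\cdot\fa))$. Writing $\tfrac{c}{c-\epsilon}u=u+\delta u$ with $\delta=\tfrac{\epsilon}{c-\epsilon}>0$, the vector $\delta u$ is strictly positive because $u\in\mathbf 1+M$ has all coordinates $\geq 1$; and adding a strictly positive vector to any point of $P(c\cdot\fa)$ lands in $\mathrm{Int}(P(c\cdot\fa))$ (again because $P(c\cdot\fa)+\RR_{\geq 0}^{d}\subseteq P(c\cdot\fa)$). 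Since $u\in\partial P(c\cdot\fa)\subseteq P(c\cdot\fa)$, this gives $u\in\mathrm{Int}(P((c-\epsilon)\cdot\fa))$, while $u\notin\mathrm{Int}(P(c\cdot\fa))$ by definition of the boundary.

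The converse direction is the crux, and is where smallness of $\epsilon$ enters. Suppose $u\in\mathbf 1+M$ with $u\in\mathrm{Int}(P((c-\epsilon)\cdot\fa))$ and $u\notin\mathrm{Int}(P(c\cdot\fa))$; I must show $u\in P(c\cdot\fa)$, which (since $u$ is not in the interior) yields $u\in\partial P(c\cdot\fa)$. Write $P(\fa)=\RR_{\geq 0}^{d}\cap\bigcap_{i}\{\langle a_{i},x\rangle\geq\beta_{i}\}$, the finite intersection over the bounded facets, where $\cH_{i}=\{\langle a_{i},x\rangle=\beta_{i}\}$ is the hyperplane through $P_{i}$, $a_{i}$ is the primitive \emph{integral} inner normal (necessarily strictly positive, since the recession cone of $P(\fa)$ is the full orthant), and $\beta_{i}\in\ZZ_{>0}$ (being $\langle a_{i},v\rangle$ for a lattice vertex $v$ of $P_{i}$); then $P(t\cdot\fa)=\RR_{\geq 0}^{d}\cap\bigcap_{i}\{\langle a_{i},x\rangle\geq t\beta_{i}\}$. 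If $u\notin P(c\cdot\fa)$ then, as $u$ has positive coordinates, $\langle a_{i},u\rangle<c\beta_{i}$ for some $i$; but $u\in P((c-\epsilon)\cdot\fa)$ forces $\langle a_{i},u\rangle\geq(c-\epsilon)\beta_{i}$. Thus the integer $\langle a_{i},u\rangle$ lies in the half-open interval $[(c-\epsilon)\beta_{i},\,c\beta_{i})$. Since there are only finitely many bounded facets and each $c\beta_{i}$ is a fixed positive rational, I would fix $\epsilon>0$ once and for all so small that $[(c-\epsilon)\beta_{i},\,c\beta_{i})\cap\ZZ=\emptyset$ for every $i$ (as $\epsilon\downarrow 0$, $(c-\epsilon)\beta_{i}\uparrow c\beta_{i}$, so eventually this interval is integer-free). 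This contradiction gives $u\in P(c\cdot\fa)$, and the combinatorial claim — hence the lemma — follows.

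I expect the only genuine subtlety to be in the last paragraph: one must be careful that the inner normals $a_{i}$ of the bounded facets can be taken integral (so that $\langle a_{i},u\rangle\in\ZZ$ for lattice $u$) and that a single $\epsilon$ works simultaneously for all of them, which it does as a minimum of finitely many thresholds; incidentally, this same choice of $\epsilon$ automatically ensures there are no candidate jumping numbers strictly between $c-\epsilon$ and $c$. Everything else is routine bookkeeping with Howald's description of the multiplier ideals and with the facet structure of the Newton polyhedron.
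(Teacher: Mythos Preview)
Your proof is correct and follows the same approach as the paper, namely reading off from Howald's theorem which monomials lie in $\iI(\fa^{c-\epsilon})$ but not in $\iI(\fa^{c})$. In fact you supply considerably more detail than the paper does: the paper's proof simply asserts that ``the monomials that are contained in $\iI(\fa^{c-\epsilon})$ but not in $\iI(\fa^{c})$ are exactly the monomials $x^{v}$ such that $v+\mathbf{1}$ is on the boundary of $P(c\cdot\fa)$,'' whereas you carefully justify both inclusions and the integrality argument governing the choice of $\epsilon$.
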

	
	\begin{proof}
		Howald's theorem implies that	a real number $c > 0$ is a jumping number of $\fa$ if and only if $cP_{i} ^{\circ} \cap M $ is non-empty for some $i$. Moreover, the monomials that are contained in $\iI(\fa^{c - \epsilon})$ but not in $\iI(\fa^{c})$ are exactly the monomials $x^{v}$ such that $v + \mathbf{1}$ is on the boundary of $ P(c \cdot \fa)$. Therefore, the $k-$vector space $\iI(\fa^{c - \epsilon})/\iI(\fa^{c})$ is isomorphic to the vector space generated by the monomials in $\partial P(c \cdot \fa) \cap (\mathbf{1} + M)$.
	\end{proof}
	

	\begin{proof}[Proof of Theorem~\ref{monomial}]	
	The condition that $\fa$ is an $\fm-$primary monomial ideal is equivalent to the condition that the region $P(\fa)$ intersects each coordinate axis. In this case, the boundary of $P(\fa)$ is the union of the bounded faces $P_{1}, \dots, P_{r}$ and the unbounded faces exactly along the coordinate hyperplanes. The faces $P_{1}, \dots, P_{r}$ are polytopes of dimension $d-1$ with vertices in $M$. Each polytope $P_{i}$ is contained in a unique hyperplane $\cH_{i} \subset \RR^{d}$. 
	
	For the first claim, we note that addition by $\textbf{1}$ gives a one-to-one correspondence between vectors in $M$ and the vectors in $M$ that do not lie on any of the coordinate hyperplanes. Hence, the vectors $v$ in $M$ such that $v + \textbf{1} \in \partial P(c \cdot \fa)$ are in one-to-one correspondence with the vectors in $\partial P(c \cdot \fa)$ that do not lie on any of the coordinate hyperplanes. But, since the components of the boundary that do not lie on the coordinate hyperplanes are exactly the bounded components $P_{1}, \dots, P_{r}$, this proves the first claim. So the formula for the multiplicity follows from this and Lemma~\ref{secobs} above.
	
	For $c > 0$, let $cP_{i}$ denote the polytope obtained by scaling each vector in $P_{i}$ by $c$ and let $c\cH_{i}$ be the corresponding scaled hyperplane.
	
	For the last claim, we use the following fact: Let $Q$ be a convex polyhedron of dimension $\ell$ in $\RR^{\ell}$. Then, $$ \vol(Q) = \lim \limits_{t \to \infty} \frac{\#(tQ \cap \ZZ^{\ell})}{t^{\ell}} .$$
	
	We actually need a slightly more general version of this formula: Suppose $\{ Q_{n}\}_{n \geq 1} $ is a sequence of polyhedra of dimension $\ell$ in $\RR^{\ell}$ such that $Q_{n}$ is some (possibly non-integer) translate of $c_{n} Q_{1}$ where $\{c_{n}\}$ is a sequence of real numbers such that $c_{n} \to \infty$ as $n \to \infty$. Then
	
	$$ \vol(Q_{1}) = \lim \limits_{n \to \infty} \frac{\# (Q_{n} \cap M)}{c_{n}^{\ell}}  .$$
	
	This statement actually follows immediately from the previous statement with the additional observation that when we translate a polyhedron by any vector, then the difference in the number of lattice points is bounded above by a fixed multiple of volume of the boundary of the polyhedron. Hence, the difference does not really matter for the limit.
	
	Suppose $c\cH_{i} \cap M \neq \emptyset$, then the number points of $(c+n) \cH_{i} \cap M$ in $P((c+n) \cdot \fa)$ is the same as the number of lattice points in a translate of $\frac{c+n}{c} (cP_{i})$ in $c\cH_{i}$. This is because, any lattice point on $P_{i}$ gives us a natural bijection between the lattice points in $c\cH_{i}$ and $ (c+n) \cH_{i}$ (by translating). Under this translation, we can identify $(c+n)P_{i}$ in $(c+n)\cH_{i}$  with a translate of $ \frac{c+n}{c} (c P_{i})$ in $c\cH_{i}$. Hence, using the formula above with $\ell = d-1$, $\RR^{d-1} = c\cH_{i}$, $Q_{n} = (c+n)P_{i}$ and $c_{n} = \frac{c+n}{c}$,  we have
	$$ 	\vol(P_{i}) = \frac{1}{c^{d-1}} \vol(cP_{i}) =  \lim\limits_{n \to \infty} \frac{\#((\frac{c+n}{c} P_{i}) \cap M)}{(\frac{c+n}{c} )^{d-1}}	.$$
	
	Since the number of lattice points in the intersection of $(c+n)P_{i}$ and the coordinate hyperplanes or the intersection of $(c+n)P_{i}$ and $(c+n)P_{j}$ for $j \neq i$ grows at a rate of at most $n^{d-2}$,  the formula for $\rho_{c}$ follows. \end{proof}
	

	It follows immediately from the theorem that:
	
	\begin{Cor} \label{anotcor}
		If $c$ is a jumping number of an $\fm-$primary monomial ideal $\fa \subset k[x_{1}, \dots, x_{d}]$, then the Rees coefficient $\rho_{c}$ is positive.
	\end{Cor}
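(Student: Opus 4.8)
The plan is to deduce the corollary directly from the two formulas in Theorem \ref{monomial}, using only that a volume of a polytope is nonnegative, and strictly positive when the polytope is full-dimensional. Recall the Rees-coefficient formula $\rho_{c} = \sum_{c\cH_{i} \cap M \neq \emptyset} \vol(P_{i})$, a sum of nonnegative terms; hence it suffices to exhibit a single bounded face $P_{i}$ of $P(\fa)$ with $c\cH_{i} \cap M \neq \emptyset$ and $\vol(P_{i}) > 0$.

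First I would invoke the hypothesis that $c$ is a jumping number. By the multiplicity formula $m(c) = \#\{\bigcup_{i} cP_{i}^{\circ} \cap M\}$ from Theorem \ref{monomial} (equivalently, by the description in Lemma \ref{secobs}), the number $c$ is a jumping number precisely when $cP_{i}^{\circ} \cap M \neq \emptyset$ for some $i$. So the hypothesis produces an index $i$ and a lattice point $w \in cP_{i}^{\circ} \cap M$. In particular $w \in cP_{i} \subset c\cH_{i}$, so $c\cH_{i} \cap M \neq \emptyset$, which means $\vol(P_{i})$ is one of the summands appearing in the expression for $\rho_{c}$.

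It then remains to check $\vol(P_{i}) > 0$. Here I would use the cofiniteness of $\fa$: as observed at the start of the proof of Theorem \ref{monomial}, the ideal $\fa$ being $\fm$-primary forces $P(\fa)$ to meet every coordinate axis, so the boundary of $P(\fa)$ splits into pieces lying on the coordinate hyperplanes together with the bounded faces $P_{1}, \dots, P_{r}$, each of which is a genuine $(d-1)$-dimensional polytope (a facet) with vertices in $M$. Thus $\vol(P_{i})$, computed as a $(d-1)$-dimensional volume, is strictly positive. Since every other summand in $\rho_{c}$ is likewise the $(d-1)$-volume of a polytope and hence nonnegative, we conclude $\rho_{c} \geq \vol(P_{i}) > 0$.

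I do not expect a genuine obstacle here — this is a short bookkeeping argument — but the one point that deserves care is the last step: one must make sure that the face $P_{i}$ extracted in the second step is a full-dimensional facet of $P(\fa)$ and not some lower-dimensional face (which would contribute zero volume). This is exactly where the $\fm$-primary hypothesis is used, and it is worth stating explicitly rather than leaving implicit.
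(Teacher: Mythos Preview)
Your proposal is correct and is exactly the argument the paper has in mind: the corollary is stated as following ``immediately'' from Theorem \ref{monomial}, and what you have written simply spells out that immediacy --- a jumping number forces $cP_{i}^{\circ}\cap M\neq\emptyset$ for some $i$, hence $c\cH_{i}\cap M\neq\emptyset$, so the strictly positive $(d-1)$-volume $\vol(P_{i})$ occurs in the sum for $\rho_{c}$. Your caveat about the face being full-dimensional is already handled in the paper's proof of Theorem \ref{monomial}, where the $P_{i}$ are explicitly taken to be the $(d-1)$-dimensional bounded facets of $P(\fa)$.
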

	
	\section{Poincar\'{e} Series}
	
	Let $X$ be a smooth variety. For any closed subscheme $Z$ of $X$ and an irreducible component $Z_{1}$ of $Z$, we can assemble the multiplicities of jumping numbers (Definition~\ref{multidef}) into a generating function called the \emph{Poincar\'{e} Series} of $Z$ at $Z_{1}$. More precisely, this is the generating function defined by:
	$$  \phi_{Z, Z_{1}} (T) = \sum \limits _{c \in (0, \infty)}  m(c) T^{c} = \sum \limits _{c \in (0,1]} \sum \limits _{n =0} ^{\infty} m(c+n) T^{c+n}  .    $$ 
	
	This series is clearly only a countable sum, because the jumping numbers of $Z$ are a discrete subset of rational numbers. Moreover, there is an $ \ell \in \NN$ such that every jumping number has a denominator a factor of $\ell$ i.e., every jumping number is of the form $ \frac{n}{\ell}$ for $n \in \NN$. Then, having chosen such an $\ell$ and setting $z = T^{1/\ell} $, we see that $\phi_{Z, Z_{1}}(T)$ is actually a power series in $z$. We use the polynomial nature of $m(c+n)$ proved in Theorem~\ref{thm3.1} to prove that this power series is actually a rational function:
	
	\begin{thm} \label{poincare}
		$\phi_{Z, Z_{1}}(z)  $ is a rational function of $z$ where $z = T^{1/\ell}$ and $\ell$ is as above. In fact, we have the following formula:
		\begin{equation} \label{poincareformula}
			\phi_{Z, Z_{1}} (T) = \sum \limits _{c \in (0,1]} \Big( \frac{m(c)}{(1-T)} + \frac{ \gamma_{c,1}  T}{(1-T)^{2}} + \cdots + \frac{ \gamma_{c,h-2} T} {(1-T)^{h-1}} + \frac{ \rho_{c} (h-1)!  T}{(1-T)^{h}} \Big) T^{c}
		\end{equation}
		for some rational numbers $\gamma_{c,i}$, where $h$ denotes the codimension of $Z_{1}$ in $X$.
	\end{thm}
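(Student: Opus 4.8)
The plan is to deduce the formula from the polynomiality of $n \mapsto m(c+n)$ established in Theorem~\ref{thm3.1} by a generating-function computation, one residue class modulo $\ZZ$ at a time. First I would note that the outer sum $\sum_{c\in(0,1]}$ is effectively finite: by Definition~\ref{multidef}, $m(c+n)\neq 0$ only when $c+n$ is a jumping number, hence only when $c$ is one of the finitely many candidate jumping numbers in $(0,1]$ (as noted in Section~\ref{candidate}, $1$ is always a candidate jumping number, so every jumping number is an integer translate of a candidate jumping number lying in $(0,1]$, and the set of candidate jumping numbers is stable under translation by $\ZZ$). Thus it suffices to show that for each such $c$,
\[
\sum_{n\ge 0} m(c+n)\,T^{n} \;=\; \frac{m(c)}{1-T} + \frac{\gamma_{c,1}T}{(1-T)^{2}} + \dots + \frac{\gamma_{c,d-2}T}{(1-T)^{d-1}} + \frac{\rho_{c}(d-1)!\,T}{(1-T)^{d}}
\]
for suitable rational numbers $\gamma_{c,j}$; multiplying by $T^{c}$ and summing over $c$ then yields \eqref{poincareformula}.

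To prove the displayed identity I would set $p_{c}(n):=m(c+n)$, which by Theorem~\ref{thm3.1} is a polynomial in $n$ of degree at most $d-1$ with $p_{c}(0)=m(c)$. The polynomials $\binom{n+j-1}{j}$ for $j=1,\dots,d-1$ have degrees $1,\dots,d-1$ and all vanish at $n=0$, so they form a basis of the space of polynomials of degree $\le d-1$ that vanish at $0$; hence I can write $p_{c}(n) = m(c) + \sum_{j=1}^{d-1}\beta_{c,j}\binom{n+j-1}{j}$ for unique rationals $\beta_{c,j}$. Comparing the coefficient of $n^{d-1}$ on both sides and invoking Definition~\ref{rhocdef} (that coefficient is $\rho_{c}$ on the left, and $\beta_{c,d-1}/(d-1)!$ on the right) gives $\beta_{c,d-1}=\rho_{c}(d-1)!$. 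Now I would use the elementary power-series identities $\sum_{n\ge 0}T^{n}=(1-T)^{-1}$ and $\sum_{n\ge 0}\binom{n+j-1}{j}T^{n}=T(1-T)^{-(j+1)}$ — the latter obtained from $\sum_{n\ge 0}\binom{n+j}{j}T^{n}=(1-T)^{-(j+1)}$ by an index shift, the $n=0$ term $\binom{j-1}{j}$ being $0$ — to get
\[
\sum_{n\ge 0} p_{c}(n)\,T^{n} \;=\; \frac{m(c)}{1-T} + \sum_{j=1}^{d-1}\frac{\beta_{c,j}\,T}{(1-T)^{j+1}},
\]
which is exactly the required shape with $\gamma_{c,j}=\beta_{c,j}$ for $1\le j\le d-2$ and $\gamma_{c,d-1}=\rho_{c}(d-1)!$.

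It remains to read off rationality in $z$. Each $c$ contributing to \eqref{poincareformula} is a candidate jumping number in $(0,1]$, so $c=a_{c}/\ell$ with $a_{c}\in\{1,\dots,\ell\}$; hence $T^{c}=z^{a_{c}}$ and $T=z^{\ell}$, so every summand of \eqref{poincareformula} is a rational function of $z$, and therefore so is the finite sum $\phi_{Z}$.

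I do not expect a genuine obstacle: the theorem is essentially a formal corollary of Theorem~\ref{thm3.1}. The only points needing care are the finiteness of the outer sum, the choice of the binomial-coefficient basis so that the partial-fraction shape and the factor $(d-1)!$ emerge exactly as stated (rather than only after a further invertible change of coefficients), and the bookkeeping of index shifts in the geometric-series identities.
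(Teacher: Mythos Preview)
Your proposal is correct and follows essentially the same approach as the paper: both arguments fix $c\in(0,1]$, expand the polynomial $m(c+n)$ in the basis $1,\binom{n}{1},\binom{n+1}{2},\dots,\binom{n+d-2}{d-1}$, and then sum termwise using the identities $\sum_{n\ge 0}T^{n}=(1-T)^{-1}$ and $\sum_{n\ge 0}\binom{n+j-1}{j}T^{n}=T(1-T)^{-(j+1)}$. Your write-up is slightly more explicit than the paper's about the finiteness of the outer sum and the final passage to rationality in $z=T^{1/\ell}$, but the mathematical content is the same.
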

	
	\begin{proof}
		For any $c \in (0,1]$, by Theorem~\ref{thm3.1} we have $$ m(c+n) = \alpha_{0,c} + \alpha _{1,c} n + \cdots + \alpha _{h-1, c} n^{h-1}. $$ (Of course, $\alpha _{h-1,c} = \rho_{c}$ and $\alpha _{0,c} = m(c)$.) Now, the list of polynomials $p_{0}(n) = 1, p_{1}(n) = n, p_{2} (n) = \dbinom{n+1}{2}, \dots, p_{h-1}(n) = \dbinom{n+h-2}{h-1}$ has exactly one polynomial of each degree between $0$ and $h-1$. So, these polynomials form a basis over $\QQ$ of the space of rational polynomials of degree less than $h$. So, we can write the polynomial $m(c+n)$ in $n$ in terms of the $p(i)$'s as follows:
		\begin{equation}
			m(c+n) = \sum \limits _{i=0} ^{h-1} \gamma_{c, i} p_{i} (n) \quad \text{for some rational numbers } \gamma_{c,i}  .
		\end{equation}
		
		Inspecting the constant and the leading coefficient, we get $\gamma_{c,0} = m(c)$ and $\gamma _{c,h-1} = (h-1)! \times \rho_{c} $.
		Thus,
		$$
		\begin{aligned}
			\phi_{Z, Z_{1}} (T) &= \sum \limits _{c \in (0, \infty)}  m(c) T^{c} = \sum \limits _{c \in (0,1]} \sum \limits _{n =0} ^{\infty} m(c+n) T^{c+n} \\
			&= \sum \limits _{c \in (0,1]} T^{c} \sum \limits _{n =0} ^{\infty} ( \gamma_{c,0} p_{o}(n) + \gamma _{c,1} p_{1}(n) + \cdots + \gamma _{c, h-1} p_{h-1}(n) ) T^{n} \\ 
			&= \sum \limits _{c \in (0,1]} T^{c} \Big(\gamma_{c,0} \sum \limits _{n =0} ^{\infty}  p_{o}(n)  T^{n}  +  \gamma_{c,1}  \sum \limits _{n =0} ^{\infty} p_{1}(n) T^{n}  + \cdots + \gamma_{c, h-1} \sum \limits _{n =0} ^{\infty}  p_{h-1}(n) T^{n} \Big).
		\end{aligned}
		$$
		
		Now the theorem follows from the following elementary observation: $$ \sum _{n=0} ^{\infty} p_{i}(n) T^{n} = \begin{cases}
			\frac{1}{1-T} \quad \text{if } i=0, \\
			\frac{T}{(1-T)^{i+1}} \quad \text{if } i \geq 1.
		\end{cases}$$\\
		When $i =0$ the formula is clear. For $i \geq 1$, the formula is equivalent to
		$$ \frac{1}{(1-T)^{i+1}} = \sum _{j = 0} ^{\infty} \dbinom{j+i}{i} T^{j}     $$
		which is a well-known combinatorial identity counting degree $j$ monomials in $i+1$ variables.
	\end{proof}
	
	\begin{rem}
		Theorem~\ref{poincare} was proved independently by \`{A}lvarez Montaner and N\'{u}\~{n}ez-Betancourt in \cite{AMNB21} using different methods. They also prove an analogous theorem for Test Ideals, which are positive characteristic counterparts of multiplier ideals. We are grateful to them for conveying the explicit form for the Poincar\'e Series as in Equation \ref{poincareformula}.
	\end{rem}
	
	\begin{rem}
		The case when $Z$ is a point scheme and $X$ is a surface of Theorem~\ref{poincare} was proved by Alberich-Carrami\~{n}ana et al. in \cite{MR3558221}, generalizing the work of Galindo and Monserrat in the case of simple complete ideals in \cite{MR2671187}. 
	\end{rem}

	\printbibliography
\end{document}